\definecolor{darkgreen}{rgb}{0,0.5,0}
\definecolor{darkblue}{rgb}{0,0,0.7}
\definecolor{darkred}{rgb}{0.9,0.1,0.1}
\definecolor{lightblue}{rgb}{0,0.51,1}
\numberwithin{equation}{section}
\def\@abssec#1{\vspace{.05in}\footnotesize \parindent .2in 
{\bf #1. }\ignorespaces} 
\newtheorem{theorem}{Theorem}[section]
\newtheorem{lemma}[theorem]{Lemma}
\newtheorem{proposition}[theorem]{Proposition}
\theoremstyle{definition}
\def\ds{\displaystyle}
\def \Rm {\mathbb R}
\def \Cm {\mathbb C}
\newcommand{\eps}{\varepsilon}
\newcommand{\E}{\mathbb E}
\newcommand{\be}{\begin{equation}}
\newcommand{\ee}{\end{equation}}
\newcommand{\bea}{\begin{eqnarray}}
\newcommand{\eea}{\end{eqnarray}}
\newcommand{\bee}{\begin{eqnarray*}}
\newcommand{\eee}{\end{eqnarray*}}
\newcommand{\bP}{\mathbf P}
\def\fref#1{{\rm (\ref{#1})}}
\newcommand{\calL}{\mathcal L}
\newcommand{\calN}{\mathcal N}
\newcommand{\calW}{\mathcal W}
\newcommand{\cG}{\mathcal{G}}
\newcommand{\cout}[1]{}
\begin{document}

\author{Wenjia Jing \thanks{Yau Mathematical Sciences Center, Tsinghua University, Beijing 100084, China. Email: {\tt wjjing@math.tsinghua.edu.cn}} \and Olivier Pinaud \thanks{Department of Mathematics, Colorado State University, Fort Collins, CO 80525. Email: {\tt pinaud@math.colostate.edu}}}

\title{A backscattering model based on corrector theory of homogenization for the random Helmholtz equation}

\maketitle

\begin{abstract}

This work concerns the analysis of wave propagation in random media. Our medium of interest is sea ice, which is a composite of a pure ice background and randomly located inclusions of brine and air. From a pulse emitted by a source above the sea ice layer, the main objective of this work is to derive a model for the backscattered signal measured at the source/detector location. The problem is difficult in that, in the practical configuration we consider, the wave impinges on the layer with a non-normal incidence. Since the sea ice is seen by the pulse as an effective (homogenized) medium, the energy is specularly reflected and the backscattered signal vanishes in a first order approximation. What is measured at the detector consists therefore of corrections to leading order terms, and we focus in this work on the homogenization corrector. We describe the propagation by a random Helmholtz equation, and derive an expression of the corrector in this layered framework. We moreover obtain a transport model for quadratic quantities in the random wavefield in a high frequency limit.  
\end{abstract}

\section{Introduction}

This work is motivated by the study of electromagnetic wave propagation in sea ice. The latter is a formidably complex multiscale material, as a composite of pure ice, brine pockets, and air inclusions of different sizes, shapes, and contrasts. Sea ice is often represented as a layered medium, where the volume fraction and the nature of the inclusions vary from layer to layer. Sea ice is dispersive, mostly because of the high salt content of brine, and anisotropic due to the particular elongated shape of the brine pockets. Its physical properties, e.g. complex permittivity, depend on various parameters such as the temperature and the salinity, and there is quite a large literature on experimental estimations of such quantities, see e.g. the monograph \cite{bookice}. From a theoretical viewpoint, sea ice is modeled by a background (the pure ice), with randomly located inclusions of brine and air with shapes following some appropriate statistical distributions. There is usually a distinction between the young ice that is a few meters thick, and the multi-year ice that can be up to ten meters thick \cite{holt}.

The problem we are interested in here is the estimation, from radar data, of the thickness of the sea ice layer together with its effective permittivity. The particular experimental configuration we consider is that of a plane,  equipped with a radar system, flying at a given height and along a given path. The main question is then to determine whether the layer is sufficiently thick for the plane to land. The essential difficulty in doing so is that the beam emitted by the radar system is not perpendicular to the sea ice layer, but impinges on it with a given angle (the reason for this is to remove the so-called left-right ambiguity in some radar systems). 

This difficulty is explained as follows. In the frequency range the radar operates, say from 100MHz to 1GHz (higher frequencies do not penetrate well in the sea ice, see e.g. \cite{dierking}, and are therefore not adequate for our purpose), there is a clear separation of scales between the wavelength and the size of the inclusions: typical numbers given in \cite{vant} are 5mm for the air bubbles radius, and 5mm for the major axis of brines pockets with 0.025mm for the transversal axis; since the wavelength ranges from about 30cm to a few meters, it is then always significantly larger than the inclusions. This corresponds to a homogenization regime, where wave propagation in a highly heterogeneous medium is accurately approximated by wave propagation in a constant, effective medium. Note that even though the volume fraction of the brine pockets is not too large (10\% for brine pockets, 25\% for air bubbles in some appropriate conditions \cite{vant}), their effect is not negligible due to their very large contrast: while the dielectric constant of pure ice is about 3.14 with essentially no absorption, that of brine is about 70 with an absorption of 20 \cite{vant}. 

If the plane is at an altitude of 50 to 100 meters, we can assume that waves propagating in the air are in a high frequency regime, and therefore that ray theory applies. The consequence of this is, within the homogenization/ray theory picture and assuming the air/sea ice interface is flat,  that there should simply be no backscattered signal at the antenna as an application of Snell-Descartes laws. This is of course not the case in practice, and means that what is measured at the detector corresponds to correctors to these approximations. Deriving models for these measurements is the main objective of this work.

There are various sources of corrections to the homogenization/ray theory description: (i) surface roughness at the air/sea ice interface (ii) surface roughness at the sea ice/seawater interface (iii) surface roughness between layers in the sea ice (iv) correctors to the geometrical optics approximation (v) correctors to the homogenization limit. Besides, a complete description of the electromagnetic wave propagation in the sea ice involves the resolution of Maxwell's equations, in particular in order to account for anisotropy and polarization effects. Our ultimate goal is then to treat (i)--(v) in Maxwell's picture to model the backscattered signal. This is an extremely challenging task to account for all these phenomena at once, all the more in the context of Maxwell's equations. We decided therefore to start our program by designing a simplified, yet realistic, model for wave propagation that retains the essential feature of the true physical situation, namely that the measured data are essentially obtained from corrections to the homogenization/ray theory approximation.

We will then describe the propagation by a scalar wave equation, namely the Helmholtz equation, ignoring anisotropy and polarization. We will also suppose that we are in a regime where the effective (homogenized) coefficients of sea ice are simply given by the expectation of the random coefficients. This allows us to obtain a relatively direct characterization of the corrector to homogenization as a Gaussian field. We will provide analytical conditions for this assumption to hold, and also identify experimental situations where the assumption seems reasonable. Note that when the homogenized coefficients are not given by the expectation, the characterization of the corrector is considerably more difficult. There has been a lot of progress in this direction over the last years, mostly in the context of elliptic (diffusion) equations, see \cite{arms-ecole, gloria-jems, gloria-inv} for a few references. One of our future research plans is then to adapt these new results to our present problem. Finally, the characterization of the corrector in the stochastic homogenization of Maxwell's equations is an even more difficult question; to the best of our knowledge a precise characterization of the corrector in that setting remains open to this day.

Two types of asymptotic analysis will be performed in this work: first, a fluctuation theory to characterize the limit of the corrector to the homogenized model; then a high frequency limit via Wigner transforms to obtain a simplified transport model for correlations of the wavefield. We will provide conditions for these two limits to hold at once. In the mathematical analysis, we will focus on the corrector to homogenization aspects by assuming that the sea ice consists of only one layer with flat interfaces. We will then generalize our results without proofs to a multi-layer non-flat interfaces situation assuming there is a separation of scales between the sea ice inhomogeneities and the rough boundaries. Our proofs are based on adaptations of fairly standard methods, and what seems original in this work is their combination and the application to the sea ice problem.

The paper is structured as follows: In section \ref{secmod}, we set up the model, define the scalings and the random field modeling the heterogeneities. In section \ref{results}, we state our main results: the characterization of the corrector to homogenization is given in Theorems \ref{thm:homog} and \ref{thm:clt}, and a transport model for the correlations of the corrector in Theorem \ref{th:HF}. In section \ref{secgen}, we provide generalizations to more complex settings of multi-layer ices and non-flat interfaces. A conclusion is offered in section \ref{conc}. The other sections of the article are dedicated to the proofs of our main results, in particular to estimates on the Green's function of the homogenized problem.

\paragraph{Acknowledgments.} WJ's work is partially supported by the NSF of China under Grant No.\,11701314. OP's work is supported by NSF CAREER Grant DMS-1452349. OP is indebted to Professor Margaret Cheney for bringing up the problem addressed in this work and for many stimulating discussions.

\section{The Mathematical Model} \label{secmod}
Our starting point is the Helmholtz equation in $\Rm^d$, with $d=2,3$:
\be \label{helm}
\Delta u+\frac{\omega^2}{c^2(x)}u =S_\omega, \qquad x \in \Rm^d.
\ee
The function $c(x)$ is the speed, and $S_\omega$ is the source with (angular) frequency $\omega$. A point $x \in \Rm^d$ will be written as $x=(x',z)$, where $z \in \Rm$ and $x' \in \Rm^{d-1}$. We assume that the medium of propagation consists of three layers separated by two horizontal flat interfaces: the upper half-space (the air) $\{(x',z) \in \Rm^d \,:\, z > 0\}$, the sea ice $\{-L_0 < z < 0\}$, and the seawater $\{z < -L_0\}$ below the sea ice. The speed function $c(x)$ hence has the form
\begin{equation*}
\frac{1}{c^2(x)}=
\begin{cases}
 \frac{1}{c_0^2} \qquad &z>0,\\
\frac{1}{c_1^2}+V(\frac{x}{\ell_c}) \qquad &z \in (-L_0,0),\\
\frac{1}{c_2^2} \qquad &z<-L_0.
\end{cases}
\end{equation*}
Here, $c_0$ is the free space velocity, $c_1=c_0/n_1$, where $n_1$ is the (complex) refractive index of pure ice; similarly, $c_2=c_0/n_s$, and $n_s$ the refractive index of the sea water.  The term $V$ is a complex-valued random field that models heterogeneities at the scale $\ell_c$, and we assume it is such that the real part of $1/c^2(x)$ is strictly positive. We will give a concrete example for $V$ later towards the end of the section. 

Equation \fref{helm} needs to be supplemented with appropriate conditions at the infinity in order to have a unique solution. In the layered (waveguide) structure we consider here, it is not fully straightforward to adapt the classical Sommerfeld radiation conditions. The matter is discussed in \cite{ciraolo1} in two dimensions, and the correct conditions are found by studying separately the different kind of modes supported by the problem (e.g. evanescent or radiative). In order to simplify the mathematical analysis, we bypass the question of the boundary conditions and add some (small) artificial absorption to \fref{helm}, which immediately yields a unique solution in $L^2(\Rm^d)$ provided that $S_\omega \in L^2(\Rm^d)$. This has essentially no effects on our results since our estimates are made independent of the artificial absorption. The equivalence between imposing Sommerfeld conditions and adding small absorption is not trivial to establish and is investigated in \cite{castella-rad} for instance.

When the radar system is carried by a plane flying at height $H$, with velocity $V_p$ along a straight line in the direction of $x_1$ axis, $S_\omega$ has the form (for $d=3$),
\be \label{source}
S_\omega(x)=g((\omega-\omega_0)B^{-1}) \chi_0(x_1-V_p t,x_2,z-H), \qquad x=(x_1,x_2,z). 
\ee
Since $V_p$ is usually much smaller than $c_0$, the source is considered as fixed. Letting $t$ vary yields different sets of sources and measurements, which will be useful when calculating statistical averages provided that the underlying random medium is stationary and ergodic. Above, $g$ models the frequency distribution of the source with central frequency $\omega_0$ and bandwidth $B$.  The function $\chi_0$ is typically the characteristic function of a rectangle appropriately oriented (this models a beam with a direction orthogonal to the rectangle).

The simple model we just set up can be readily generalized as follows: firstly, while we considered above only one type of heterogeneities at the scale $\ell_c$ (for instance the brine pockets), additional (independent) random fields could be added to account for more inclusions (e.g. the air bubbles); secondly, a multi-layer model in the sea ice could be considered, to separate for instance young ice to multi-year ice. We will perform the mathematical analysis in the simplest case, and state the generalized results without proofs since the augmented models do not require further essential modifications except for more involved algebra. Finally, we remark on the situation when the interfaces are not flat and vary at a scale larger than that of media heterogeneities. From this separation of the scales, we expect that our results hold with just a rewriting of the interfaces. Nevertheless, our method of proof would have to be adapted since it relies on the translational invariance in the transverse variables, which would no longer hold. 

\medskip

The next section is devoted to the comparison between various length scales of the problem and to the discussion of appropriate scalings.

\subsection{Scalings} The largest scale in the problem is the height of plane $H$ (tenths of meters), and the smallest one is the typical size $\ell_c$ of the heterogeneities; we recall that the latter is around the millimeter for the air bubbles and for the long axis of the brine pockets. The central wavelength is
$$
\lambda_0:=\frac{2 \pi c_0}{ \omega_0}.
$$
In the regime we consider here, the central frequency is between 100MHz and 1Ghz, which corresponds to wavelengths between roughly 3m and 30cm. As a consequence, 
$$
\lambda_0 \gg \ell_c.
$$
Since the wavelength in pure ice is typically between $\lambda_0/2$ and $\lambda_0$, this means that the length scale of heterogeneity is much smaller than the wavelength, and this is the typical homogenization regime: the wavefield in the heterogeneous sea ice can be well approximated by waves in a homogeneous effective medium. Deriving the homogenized equation and the first-order corrector is the first step of our analysis. We assume that the above relation between the length scales holds for all frequencies in the bandwidth of the source. In other words, we impose that, with $
\omega=\frac{2 \pi c_0}{ \lambda},$
$$
\frac{\lambda_0}{\ell_c}\gg \frac{|\lambda-\lambda_0|}{\ell_c}, \qquad \textrm{so that} \qquad \frac{\lambda}{\ell_c}=\frac{\lambda_0}{\ell_c}+\frac{\lambda-\lambda_0}{\ell_c} \simeq \frac{\lambda_0}{\ell_c} \gg 1.
$$
In terms of $B$, this means $B \ll \omega_0$. 

\medskip

In the second step of the analysis, we derive transport equations for the signals scattered by the sea ice. They are the ones associated with the corrector term. We will see that the latter is a Gaussian field, and, as a consequence, the information is contained in averages of quadratic quantities of the field which can be asymptotically described by transport equations. This corresponds to a high frequency regime. With $H$ of order of tenths of meters, and the central wavelength $\lambda_0$ between 3m and 30cm, we clearly have $H \gg \lambda_0$ and the high frequency assumption holds in the air. In the sea ice layer, of thickness between 5m and 10m (these are numbers associated with multi-year ice, younger ice is typically less than 2m thick and does not seem able to withstand the weight of a plane and is therefore not considered), the wavelength in the effective medium is around $\lambda_e=\lambda_0/2$ for a source with range 100Mhz-1Ghz. Hence, for frequencies larger than 300Mhz, the ratio $\lambda_e/L_0$ is less than 0.1 (this is the worst case), and we can assume as well that the high frequency assumption holds. We then define the following non-dimensional parameters
$$\eta=\frac{\lambda_0}{H} \ll 1, \qquad \eps=\frac{\ell_c}{\lambda_0} \ll 1,$$
and we set
$$
\beta=\eps \eta \ll 1.   
$$
We rescale the spatial variable as $x =x' H $ (where $x'$ is the new variable though, in the presentation, we drop the primes). With $k = 2\pi H/\lambda_0 =2 \pi /\eta \gg 1$ the rescaled wavenumber, $L=L_0 /H$, and $f(x)= H^2 S_\omega( H x)$, \fref{helm} becomes
\be \label{Hr}
\Delta u^\beta + k^2 \underline n^2_\beta(x) u^\beta=f,
\ee
where the complex refractive index $\underline n_\beta(x)$ reads 
$$
  \underline n^2_\beta(x)=\left\{
    \begin{array}{ll}
      \underline n^2_0(x)=n_0^2+i \alpha & z >0\\
      \underline n^2_{1,\beta}(x)=n^2_1 + i (\kappa_1+\alpha)+V(\frac{x}{\beta})& z \in (-L,0)\\
      \underline n^2_2(x) =n_2^2+ i (\kappa_2+\alpha) & z<-L.
    \end{array}
    \right.
    $$
Above, $\alpha \ll 1$ is the artificial absorption, $n_0=1$, $n_1$ (resp. $n_2$) is the real part of the index of pure ice (resp. seawater), and $\kappa_1$ and $\kappa_2$ the absorption in pure ice and in seawater. Note that $(n_j,\kappa_j)$, $j=\{1,2\}$, depends on the frequency even though we will not make this explicit. We chose the above representation of $\underline n_\beta$ for simplicity as it allows us to factor out the term $k^2$. The absorptions will be rescaled by a factor $k$ later on. The parameter $L$ is of order $0.1$ if $H \sim 50$m and $L_0 \sim 5$m, which we consider to be large compared to $\eta$ of order 0.01 for $\lambda_0 \sim 50$cm, and to $\eps$ of order $0.005$ for $\ell_c \sim 1$mm. We will therefore keep $L$ fixed in the expansions.

\paragraph{Some notations.} We denote by $S=\{ x=(x',z)\in \Rm^d: z \in (-L,0)\}$ the (rescaled) sea ice layer, and will use the principal square root of a complex number, defined by, for $a=u+i v$ with $v \neq 0$,
\begin{equation*}
\sqrt{a}=\frac{1}{ \sqrt{2}} \left( \sqrt{\sqrt{u^2+v^2}+u}+i\,\mathrm{sgn}(v) \sqrt{\sqrt{u^2+v^2}-u} \right).
\end{equation*}
The complex conjugate of a number $a \in \Cm$ is denoted by $a^*$; the real and imaginary parts of $a$ is written as $\Re a$ and $\Im a$ respectively. The statistical average of a random variable $X$ is denoted by $\overline{X}$.
\medskip

 The next section is dedicated to the construction of the random field $V$.

\subsection{The random field}

The random field $V\equiv V(x,\theta)$ is defined on some probability space $(\Omega, \mathscr{F}, \mathbb{P})$, and we will omit the variable $\theta \in \Omega$ throughout the paper for simplicity when there is no possible confusion. We decompose $V$ into
$$V\left(\frac{x}{\beta},\theta\right)=\E\left\{V\left(\frac{x}{\beta},\theta\right)\right\}+\sigma q\left(\frac{x}{\beta},\theta\right),
$$
and denote by $q^\beta$ the rescaled field $q(\cdot/\beta)$, with $\beta$ defined before, and $q_r$ and $q_i$ the real and imaginary parts of $q$. Above, $\sigma$ is real and $\sigma^2$ is the variance of $V$ (and therefore $q$ has variance one), which does not depend on $x$ by the stationarity hypothesis below. We make the following assumptions on $V$:

\begin{enumerate}
\item[(A1)] $V$ is stationary with mean $n_V^2+ i \kappa_V$, with $\kappa_V>0$. This means that there exists a random variable $\tilde V$ and a family of measure-preserving translations $\{\tau_{x}\}_{x\in \Rm^d}$, such that $\E \{\tilde V\} = n_V^2+ i \kappa_V$ and $V(x,\theta) = \tilde{V}(\tau_x \theta)$.
\item[(A2)] There exists $C_0 > 0$ deterministic, such that $\|q\|_{L_x^\infty} \le C_0$ a.e. in $\Omega$.
\item[(A3)] There exists $n_m$ and $\kappa_m$ such that $\Re \{\underline n_\beta^2(x) \}\geq n_m^2>0$ and $\Im \{\underline n_\beta^2(x)\} \geq \kappa_m>0$, for all $x \in S$.
\end{enumerate}

The boundedness assumption (A2) is made to avoid technicalities and can be weakened; assumption (A3) is there to ensure that $\underline n^2_{\beta}(x)$ makes sense from a physical standpoint for all realizations. For the model \eqref{Hr}, the effective (i.e. homogenized) coefficient is simply given by the expectation of the random coefficient, that is, using the stationarity of $V$,
\be \label{defnH}
\underline n^2(x):=\E\{ \underline n_\beta^2\}=\left\{
    \begin{array}{ll}
       \underline n^2_0(x)=n_0^2+i \alpha & z >0\\
      \underline n^2_e(x)=n^2_e + i (\kappa_e +\alpha)& z \in (-L,0)\\
      \underline n^2_2(x) =n_2^2+ i (\kappa_2+\alpha) & z<-L,
    \end{array}
    \right.
    \ee
where $n_e^2=n_1^2+n_V^2$ and $\kappa_e=\kappa_1+\kappa_V$. Later in this section, we will compare certain properties of this homogenized model with experimental data in the physics literature, and provide a more quantitative criterion for the validity of a model in which the homogenized coefficients are just the averages of the random coefficients.

We further assume that the random field $q$ gets decorrelated fast enough. To describe this, we introduce the maximal correlation function $\varrho: (0,\infty) \to [0,1]$, defined to be
\begin{equation*}
\varrho(r) := \sup \left\{\frac{|\E gh|}{\sqrt{\E g^2 \E h^2}} \,\,\Big|\,\, g\in L^2_0(\mathscr{F}_U), h \in L^2_0(\mathscr{F}_V), \mathrm{dist}(U,V) \ge r, U\in \mathcal{C}, V \in \mathcal{C}\right\},
\end{equation*}
where $\mathcal{C}$ denotes the space of compact sets in $\Rm^d$, $\mathrm{dist}(U,V) = \min_{x\in U, y\in V} |x-y|$ is the distance from $U$ to $V$, $\mathscr{F}_U$ denotes the sub-$\sigma$-algebra of $\mathscr{F}$ generated by $q_r\rvert_U$ and $q_i\rvert_U$, and $L^2_0(\mathscr{F}_U)$ is the space of square integrable mean zero real random variables measurable with respect to $\mathscr{F}_U$. We assume that:
\begin{enumerate}
\item[(A4)] the maximal correlation function satisfies
\begin{equation*}
\int_{\Rm_+} \sqrt{\varrho(r)} f_d(r) dr < \infty,
\end{equation*}
with $f_3(r)=r^2$ and $f_2(r)=r(1+|\log r|)$.
\end{enumerate}

\noindent In addition, we will need the following matrix $M$:
\be \label{defM}
M=\left(
\begin{array}{ll}
\sigma_r^2 & \gamma\\
\gamma & \sigma^2_{i}
\end{array}
\right)
\ee
where $$
\sigma_r^2= \int_{\Rm^d} \E \{q_r(0) q_r(x)\} dx, \quad \sigma_i^2= \int_{\Rm^d} \E \{q_i(0) q_i(x)\} dx, \quad \gamma = \int_{\Rm^d} \E \{q_r(0) q_i(x)\} dx.
$$
 Note that $M$ is nonnegative definite by Bochner's theorem, and that the latter integrals are all finite as a consequence of (A4), as for instance,
\be \label{quad}
|\E \{q_i(0) q_i(x)\} | \leq \varrho(|x|) \E \{q_i(0)^2\}.
\ee
The first two integrals are positive according to Bochner's theorem. 

\paragraph{A  concrete model for $V$.} A natural way to model the distribution of the heterogeneities is by a collection of spheres with radii $\{R_i\}$ centered at $\{x_i\}$ and with (complex-valued) contrasts $\{\tau_i \}$. We then consider
$$
W(x)=\sum_{i=1}^\infty  \tau_i h\left(\frac{x-x_i}{R_i} \right),
$$
for $h$ the characteristic function of the unit sphere. Note that the elongated shape of the brine pockets could easily be implemented in this framework by setting for instance

$$
W(x)=\sum_{i=1}^\infty  \tau_i h\left(\frac{\tilde x'-\tilde x'_i}{\gamma R_i}, \frac{\tilde z-\tilde z_i}{R_i}, \right),
$$
where $R_i$ is the length of the pockets along the principal axis, $\gamma<1$ models the aspect ratio, and the $\;\tilde{}\;$ in the coordinates represents a (random) rotation of the axes modeling the orientation of the pockets.
  
The locations of the centers are random and are assumed to form a Poisson point process. This random distribution of spatial points satisfies the following: for any bounded, open (or closed) set $D \subseteq \Rm^d$, the number of points both in the random collection and in $D$ is a random number $N(B)$ that follows the Poisson distribution with mean $\lambda|B|$, where $|B|$ is the volume of the set $B$ and $\lambda$ is called the intensity of the Poisson point process. Also, given $N(B)$, the points $\{x_i\} \cap B$ are independent and uniformly distributed in $B$. We may also assume that the radii $\{R_i\}$ and contrasts $\{\tau_i \}$ are random and are i.i.d random variables with appropriate distributions. Since the inhomogeneities have a maximal and a minimal radius, we can assume that  $R_m \leq R_i \leq R_M$. In the same way, we have $0<\Re(\tau_m)\leq \Re(\tau_i) \leq \Re(\tau_M)$ for the air bubbles and the brine pockets, with a similar relation for the imaginary parts. A simple calculation shows that
$$
\overline{W}:=\E \{W(x)\}=  \lambda\, \overline{\tau}\, \overline{v}
$$
where $\overline{\tau}$ is the average contrast and $\overline{v}$ the average volume of the heterogeneities. In the same way, the variance of $W$ is
$$
\textrm{Var}(W)=\E \left\{ \left(W(x)- \overline{W}\right)\left(W(x)- \overline{W}\right)^*\right\}= \lambda \overline{|\tau|^2} \overline{v}. 
$$
It is not expected that the constitutive parameters of one type of heterogeneities vary too much from one another, i.e. $\overline{|\tau|^2} \sim |\overline{\tau}|^2$, and as a consequence $(\textrm{Var}(W))^{1/2} \sim \overline{W}/ (\lambda \overline{v})^{1/2}$. Since, as mentioned in the introduction, the volume fraction of the inclusions is at least $10\%$,  then $(\lambda \overline{v})^{1/2} \geq 0.3$, and this shows that $\overline{W}$ and $(\textrm{Var}(W))^{1/2}$ are of the same order. 

This fact can help us investigate, in experimental settings, the validity of the assumption that the homogenized coefficients are just the averages of the random coefficients. We will see in the next section that the assumption holds provided that the wavelength is sufficiently large, and that the random medium fluctuations are not too strong. In the data provided in \cite[figure 4]{margaret-ice}, the real part of a diagonal component of the effective permittivity tensor ranges, depending on the frequency, from about 3.5 to 4.5  for a temperature of -14 degrees Celsius. More precisely, it is about 3.75 in the range 800Mhz-1Ghz. This means that in such configurations, the effects of the inclusions are not negligible, but are not too strong either, and correct the background of pure ice (with permittivity 3.14) by about 20\%. Based on the fact that $(\textrm{Var}(W))^{1/2} \sim \overline{W}$, it seems therefore reasonable to assume that the fluctuations around the expected value are not too large as well, and that the homogenized coefficients are well approximated by the expected value of the random permittivity. At higher temperatures, the experimental permittivity can increase up to 5.5, in which case the effective coefficients should not be obtained by a simple averaging, but rather by a much more involved analysis \cite{margaret-ice}.

We recover the scaling $V(x/\beta)$ as follows: set $\ell_c=R_M$, and suppose the intensity $\lambda$ satisfies $\lambda=\lambda_0 \beta^{-d}$. Then, setting $x=Hx$  in $W(x)$ yields, with the change of variables $R_i=r_i R_M $,
$$
V\left(\frac{x}{\beta} \right):=W(Hx)=\sum_{i=1}^\infty  \tau_i h\left(\frac{x}{\beta r_i} - x_i \right).
$$
Above, we used the fact that a Poisson point process $\{x_i\}$ with intensity $\lambda$ is statistically equivalent to a Poisson point process $\{ \beta x_i\}$ with intensity $\lambda \beta^{-d}$. The standard properties of Poisson processes show that $V$ is stationary and has finite range correlations, so that (A1) and (A4) are satisfied. Assumption (A3) is also trivially verified since the real and imaginary parts of $\tau_i$ are positive. Regarding (A2), even though for each realization the number of points $x_i$ in a bounded domain is finite, Poisson point processes allow clustering. This implies that the constant $C_0$ in assumption (A2) is not uniform in the randomness and that (A2) does not hold as stated. This issue can be fixed with additional technicalities that we just sketch here: replace the constant $C_0$ by $C_0 f(\beta)$, with $f(\beta) \to \infty$ as $\beta \to 0$. Consider then the event $\Omega_\beta$ that $|V(x/\beta)|$ is greater than $C_0 f(\beta)$. Its probability can be estimated by considering for instance the probability to find $N>f(\beta)$ points in a domain of size $\beta^d$, and can be shown to be very small as $\beta \to 0$. Define then $\tilde V=V$ on the complementary of $\Omega_\beta$ in $\Omega$, and $\tilde V=0$ on $\Omega_\beta$. Choosing an appropriate $f(\beta)$ and using $\tilde V$ to model the heterogeneities, our proofs can then be directly adapted to recover the corrector result of Theorem \ref{thm:homog}.

We present our main results in the next section.

\section{Main Results} \label{results}

\subsection{The homogenization and corrector results}
We suppose that the source $f$ is a smooth function with bounded support. In the regime we consider here, the solution $u^\beta$ to \fref{Hr}  converges as $\beta \to 0$ (in proper sense) to the solution $u$ of the following homogenized equation
\begin{equation}
\label{eq:hpde-s1}
\Delta u + k^2\underline{n}^2(x) u = f,
\end{equation} 
where $\underline n$ is defined in \fref{defnH}. We write $u= \cG f$ for the solution to \fref{eq:hpde-s1}, and have $u \in L^2(\Rm^d)$ thanks to the artificial absorption $\alpha$. We are mostly interested in the corrector $u^\beta-u$, which verifies 
\begin{equation}
\label{eq:exp01}
\Delta(u^\beta - u) + k^2\underline n^2(x)(u^\beta - u)= -k_\sigma^2 \chi q^\beta(x,\theta) u^\beta(x,\theta),
\end{equation}
where the randomness appears in the source term. Above, $k_\sigma^2 = \sigma k^2$, and $\chi$ is the characteristic function of the sea ice layer $S$.  To study the higher order correctors in $u^\beta - u$, we rewrite \eqref{eq:exp01} in a form that serves as the starting point of an iteration scheme:
\begin{equation*}
u^\beta - u = -\cG k_\sigma^2 \chi q^\beta u - \cG k_\sigma^2 \chi q^\beta (u^\beta - u).
\end{equation*}
Performing another step of iteration, we get
\begin{equation}
\label{eq:exp1}
u^\beta - u = -\cG k_\sigma^2 \chi q^\beta u + \cG k_\sigma^2 \chi q^\beta \cG k_\sigma^2 \chi q^\beta u +   \cG k_\sigma^2 \chi q^\beta  \cG k_\sigma^2 \chi q^\beta (u^\beta - u).
\end{equation} 
Defining $v^\beta=-\cG k_\sigma^2 \chi q^\beta u$, our first result below shows that, in terms of $\beta$, $u^\beta-u-v^\beta$  is of order $\beta^2$ for $d=3$ , and of order $\beta^2 |\log \beta|$ when $d=2$. We will see in our second result (Theorem \ref{thm:clt}) that $v^\beta$ is of order $\beta^{d/2}$, and as a consequence $v^\beta$ is the leading corrector as $\beta \to 0$.

\begin{theorem}\label{thm:homog}
Let $d=2,3$. Assume that the random field $q$ satisfies (A1)--(A4) and let $K$ be a bounded set in the upper half-space $\Rm_+^d=\{ x=(x',z) \in  \Rm^{d}: \; z>0\}$. Then, we have the estimate
\begin{equation}
\label{eq:corrector-s1}
\E \|u^\beta - u-v^\beta\| _{L^2(K)} \le C_{\beta,k,\sigma,\kappa_e} \beta^2 \|u\|_{L^2(S)},
\end{equation}
where
$$
C_{\beta,k,\sigma,\kappa_e}=\left\{
\begin{array}{ll}
C \sigma^2 k^{d/2} \kappa_e^{-1} \left(k^2+\beta k^d \kappa_e^{-1}\big(1+k^{-2} \kappa_e^{-1}(1+\sigma \kappa_m^{-1})\big)\right)& d=3\\
C \sigma^2 k^{d/2} \kappa_e^{-1} \left(k^2(1+|\log(k \beta)|)+k^d \kappa_e^{-1}\big(1+k^{-2} \kappa_e^{-1}(1+\sigma \kappa_m^{-1})\big)\right)& d=2,
\end{array}
\right.
$$
and $C$ is independent of $\beta$, $\alpha$, $k$, $\kappa_m$ and $\kappa_e$.
\end{theorem}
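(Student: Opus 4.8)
The plan is to start from the algebraic identity hidden in the iteration \eqref{eq:exp1}, pushed one step further. Since \eqref{eq:exp01} gives $u^\beta - u = -\cG k_\sigma^2\chi q^\beta u^\beta$, and $v^\beta = -\cG k_\sigma^2\chi q^\beta u$, substituting $u^\beta = u + (u^\beta-u)$ into the first iterate yields the clean starting point
\begin{equation*}
u^\beta - u - v^\beta = \cG k_\sigma^2\chi q^\beta\,\cG k_\sigma^2\chi q^\beta\, u^\beta =: T_1 + T_2,
\end{equation*}
where $T_1$ is the same expression with the deterministic $u$ in place of $u^\beta$, and $T_2$ carries the still-unknown corrector $u^\beta - u$. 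First I would assemble the a priori inputs. From the energy identity for \eqref{eq:hpde-s1} together with $\Im\underline n^2\ge\kappa_e$ on $S$ (assumption (A3)) one gets $\|\cG\|_{L^2(S)\to L^2(S)}\lesssim(k^2\kappa_e)^{-1}$; the dedicated Green's-function sections supply the finiteness of $\int_S|G(x,y)|^2\,dx$ and $\int_K|G(x,y)|^2\,dx$ for $y\in S$, and the near-diagonal free-field singularity $|G(x,y)|\lesssim|x-y|^{-1}$ for $d=3$, $|G(x,y)|\lesssim 1+|\log|x-y||$ for $d=2$. Expanding $\E\|v^\beta\|^2_{L^2(S)}$, using that $\E\{q^\beta(y)\overline{q^\beta(y')}\}$ concentrates on $|y-y'|\lesssim\beta$ by (A4) to extract the volume factor $\beta^d$, and the local integrability of $|G|^2$, gives the second-moment bound $\E\|v^\beta\|^2_{L^2(S)}\lesssim k_\sigma^4\beta^d\|u\|^2_{L^2(S)}$. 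Writing $(I+\mathcal B)(u^\beta-u)=v^\beta$ with $\mathcal B=\cG k_\sigma^2\chi q^\beta$ on $L^2(S)$, the pathwise bound $\|\mathcal B\|\le\sigma C_0\kappa_e^{-1}$ from (A2) makes $I+\mathcal B$ invertible in the relevant regime, so that $\|u^\beta-u\|_{L^2(S)}\lesssim\|v^\beta\|_{L^2(S)}$ pathwise and hence $\E\|u^\beta-u\|^2_{L^2(S)}\lesssim\beta^d\|u\|^2_{L^2(S)}$; this inversion is where the factor $1+\sigma\kappa_m^{-1}$ is generated.

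For the leading term $T_1$ I would bound $\E\|T_1\|_{L^2(K)}\le(\E\|T_1\|^2_{L^2(K)})^{1/2}$ and compute the fourth moment directly. The two nested integrals over $S\times S$ and their conjugates produce the four-point function $\E\{q^\beta(y_1)q^\beta(y_2)\overline{q^\beta(y_1')q^\beta(y_2')}\}$, which I would split into its three pairings plus a genuine fourth-order cumulant, the latter dominated by (A4). The decisive pairing couples $y_1$ with $y_2$ (and $y_1'$ with $y_2'$), forcing the two arguments of the \emph{inner} Green's function $G(y_1,y_2)$ to collide; after the change of variables $y_2=y_1+\beta s$ the local singularity converts the naive $\beta^d$ into $\beta^{d-1}\int|s|^{-1}R(s)\,ds=\beta^2$ when $d=3$, and into an extra logarithm $\beta^2(1+|\log(k\beta)|)$ when $d=2$. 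This is precisely the dimensional dichotomy of the statement. The remaining pairings keep both Green's functions off-diagonal and give the smaller order $\beta^{2d}$ inside the square, i.e.\ $\beta^d$ after the root, so the collision pairing dominates and yields $\E\|T_1\|_{L^2(K)}\lesssim\sigma^2k^{d/2}\kappa_e^{-1}k^2\beta^2\|u\|_{L^2(S)}$ (with the $d=2$ logarithm).

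For the remainder $T_2$ I would \emph{not} use the operator-norm bound, which loses every power of $\beta$; instead I would estimate the random operator $\mathcal A:=\cG k_\sigma^2\chi q^\beta\,\cG k_\sigma^2\chi q^\beta:L^2(S)\to L^2(K)$ through its Hilbert--Schmidt norm. With kernel $A(x,z)=k_\sigma^4 q^\beta(z)\int_S G(x,y)q^\beta(y)G(y,z)\,dy$, expanding $\E\|\mathcal A\|^2_{\mathrm{HS}}=\int_K\int_S\E|A(x,z)|^2\,dz\,dx$ and using the dominant pairing $y\approx y'$ (producing $\beta^d$ times the finite integral $\int_S|G(x,y)|^2|G(y,z)|^2\,dy$) gives $\E\|\mathcal A\|^2_{\mathrm{HS}}\lesssim k_\sigma^8\beta^d$. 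Combining with the corrector bound through Cauchy--Schwarz in $\Omega$, which legitimately handles the dependence between $\mathcal A$ and $u^\beta-u$,
\begin{equation*}
\E\|T_2\|_{L^2(K)}\le\big(\E\|\mathcal A\|^2_{\mathrm{HS}}\big)^{1/2}\big(\E\|u^\beta-u\|^2_{L^2(S)}\big)^{1/2}\lesssim k_\sigma^4\beta^{d/2}\cdot\beta^{d/2}\|u\|_{L^2(S)}=k_\sigma^4\beta^{d}\|u\|_{L^2(S)},
\end{equation*}
which is $O(\beta^3)$ for $d=3$ and $O(\beta^2)$ for $d=2$, hence dominated by $T_1$ in both cases. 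Summing the two contributions and tracking the powers of $k$, $\sigma$, $\kappa_e$, $\kappa_m$ produces the stated constant $C_{\beta,k,\sigma,\kappa_e}$.

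The hard part will be twofold. The first difficulty is extracting the \emph{exact} near-diagonal behaviour of the homogenized Green's function of the layered operator \eqref{eq:hpde-s1}, with constants that are explicit in $k,\kappa_e$ and independent of the artificial absorption $\alpha$: this is the content of the Green's-function sections and is what controls both the $\kappa_e^{-1}$ prefactors and the appearance of $|\log(k\beta)|$ in $d=2$. The second is dominating the non-Gaussian fourth-order cumulant of $q^\beta$ in the analysis of $T_1$ purely in terms of the maximal-correlation assumption (A4), since the pairing heuristic must be justified without assuming any Gaussianity of the field.
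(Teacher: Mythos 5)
Your outline reproduces the paper's architecture almost exactly: the same second iterate of the resolvent identity giving $u^\beta-u-v^\beta=\cG k_\sigma^2\chi q^\beta\cG k_\sigma^2\chi q^\beta u^\beta=T_1+T_2$, the same fourth-moment/pairing analysis for $T_1$ (the pairing that collapses the inner Green's function onto its diagonal yields $\beta^2$ for $d=3$ and $\beta^2(1+|\log(k\beta)|)$ for $d=2$, the other pairings only $\beta^{2d}$ inside the square, and the non-Gaussian fourth moment is controlled through (A4) exactly as in the paper's estimate \eqref{eq:4moments}), and a kernel/Hilbert--Schmidt bound plus Cauchy--Schwarz in $\Omega$ for $T_2$, which is a cosmetic variant of the paper's Lemma \ref{lem:cGL3} estimate \eqref{eq:Gopnorm1} combined with (A2).

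There is, however, one genuine gap: the a priori estimate for $\E\|u^\beta-u\|^2_{L^2(S)}$, without which your bound on $T_2$ collapses. You obtain it by inverting $I+\mathcal{B}$, $\mathcal{B}=\cG k_\sigma^2\chi q^\beta$, from the pathwise bound $\|\mathcal{B}\|\le\sigma C_0\kappa_e^{-1}$. A Neumann series requires $\sigma C_0\kappa_e^{-1}<1$; this smallness hypothesis appears nowhere in the theorem, and it fails precisely in the regime the paper targets, where $\kappa_e\sim k^{-1}\ll 1\le\sigma$ (see the discussion following the theorem statement). Moreover, the inversion cannot be ``where the factor $1+\sigma\kappa_m^{-1}$ is generated'': $\kappa_m$ never enters your argument, and a convergent Neumann series would produce $(1-\sigma C_0\kappa_e^{-1})^{-1}$ instead. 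The paper needs no inversion at all. It writes the exact equation $(\Delta+k^2\underline n_\beta^2)(u^\beta-u-v^\beta)=-k_\sigma^2\chi q^\beta v^\beta$ for the \emph{random} operator and applies the energy estimate of Lemma \ref{estimsol}, whose dissipation constant is the $\kappa_m$ of assumption (A3), to get $\|u^\beta-u-v^\beta\|_{L^2(S)}\le C_0\sigma\kappa_m^{-1}\|v^\beta\|_{L^2(S)}$ pathwise; combined with Lemma \ref{lem:cGL2} this yields $\E\|u^\beta-u\|^2_{L^2(S)}\le C(1+\sigma\kappa_m^{-1})^2\sigma^2(k\beta)^d\kappa_e^{-2}\|u\|^2_{L^2(S)}$ with no smallness condition, and this is the actual origin of $1+\sigma\kappa_m^{-1}$ in $C_{\beta,k,\sigma,\kappa_e}$. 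Substituting this dissipation argument for your inversion repairs the proof. A secondary caveat in the same spirit: your near-diagonal bound $|G(x,y)|\lesssim|x-y|^{-1}$ cannot hold with a constant uniform in $k$ and $\kappa_e$; the paper instead uses the splitting $G=\Phi+p$ with $\|p\|_{L^\infty}\le Ck^{d-2}\kappa_e^{-1}$ from Proposition \ref{propG}, and it is the $p$ contribution that produces the secondary term $\beta k^d\kappa_e^{-1}\big(\cdots\big)$ in the stated constant, so this splitting is needed to land on the theorem as written.
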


\medskip
In the theorem, the domain $K$ can be seen as the location of the detector where measurements are performed.  In \fref{eq:corrector-s1}, we kept track of the dependency of the constants with respect to $\sigma$, $k$, and the absorptions $\kappa_e$ and $\kappa_m$, in order to quantify how $\beta$ must relate to these parameters for the corrector result to hold. Relatively to the size of $u$ (measured here by $\|u\|_{L^2(S)})$, we need $C_{\beta,k,\sigma,\kappa_e} \beta^2\ll 1$, and remark first that for the wave to propagate in the sea ice layer, $\kappa_e$ and $\kappa_m$ have to be sufficiently small and of order $k^{-1}$. Indeed, when the absorption $\kappa_e$ is small compared to $n_e^2$ (this holds in most practical configurations, see \cite{margaret-ice}), the imaginary part of $\underline n_e$ behaves like $\kappa_e$, and we expect the wave energy to decrease exponentially with a factor proportional to $k \kappa_e |x|$. This means that $k \kappa_e$ has to be of order one for the wave not to go extinct at a shallow depth in the sea ice. We write then $\kappa_e \sim \kappa_m \sim k^{-1}$, and suppose that the variance of $V$ is not necessarily small, say $\sigma \geq 1$. The condition $C_{\beta,k,\sigma,\kappa_e} \beta^2\ll 1$ then becomes, to leading order,
$$
\beta^2 \sigma^2 k^{9/2} \ll 1 \quad \textrm{when} \quad d=3, \qquad \beta^2 \sigma^2 k^{4}(|\log (\beta k)|+\sigma k) \ll 1, \quad \textrm{when} \quad d=2.
$$
The second condition boils down to $\beta^2 \sigma^3 k^{5} \ll 1$ since when $\beta$ satisfies the latter condition, then the one involving $\log(\beta k)$ is in turn verified. Recalling that $\beta=\eps \eta$, and that $k=2\pi/\eta$, we find the conditions $\eps \sigma \ll \eta^{5/4}$ when $d=3$, and $\eps \sigma^{3/2} \ll \eta^{3/2}$ when $d=2$. These conditions hold when the wavelength is sufficiently large and the fluctuations not too strong.

The proof of Theorem \ref{thm:homog} is fairly standard and follows the lines of those e.g. of \cite{GB-08,Bal-Jing,figari}. It is based on estimates of fourth-order moments of $q$ and on estimates on the Green's function of \fref{eq:hpde-s1}. The main differences with these references are that we work in an unbounded domain, which brings in additional technical difficulties, and that we keep track of important constants, in particular of $k \gg 1$, in order to obtain a corrector result uniform in the main parameters in the problem.

The second result is a precise description of the limiting distribution of the  normalized homogenization error $(u^\beta - u)/\sqrt{\beta^d}$, in the limit as $\beta \to 0$ keeping the other parameters fixed. For its statement, we will need the square root of $M$, given by
$$
M^{1/2}=\frac{1}{t}\left(
\begin{array}{ll}
\sigma_r^2+s & \gamma \\
\gamma & \sigma^2_{i}+s
\end{array}
\right), \qquad s=\sqrt{\sigma_r^2 \sigma_i^2-\gamma^2}, \qquad t=\sqrt{\sigma_r^2+\sigma_s^2+2 s}.
$$
Denote by $\{\alpha_{ij}\}$ the entries of $M^{1/2}$, and let
\be \label{defWy}
 W_y=(\alpha_{11} W_y^{(1)} + \alpha_{12} W_y^{(2)})+i(\alpha_{21}W_y^{(1)}  + \alpha_{22} W_y^{(2)}) = \begin{pmatrix} 1 & i\end{pmatrix}M^{\frac12}\begin{pmatrix}W_y^{(1)} \\ W_y^{(2)}\end{pmatrix},
\ee
where $W^{(1)}_y$ and $W^{(2)}_y$  are standard independent multiparameter ($y$-parameter) Wiener processes \cite{khosh}. We have then the following result.

\begin{theorem}\label{thm:clt} Under the same assumptions as in Theorem \ref{thm:homog}, we have, for any bounded set $K$ in the upper half space,
\begin{equation*}
\frac{u^\beta - u}{\sqrt{\beta^d}} \xrightarrow[\beta\to 0]{\mathrm{distribution}} v(x)=-\sigma k^2 \int_{\Rm^d} G(x,y)\chi(y)u(y) dW_y \qquad \text{in} \qquad L^2(K),
\end{equation*}
where $G$ is the Green's function associated to problem \eqref{eq:hpde-s1}.
\end{theorem}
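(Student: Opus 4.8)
The plan is to combine the homogenization estimate of Theorem \ref{thm:homog} with a central limit theorem for the leading corrector $v^\beta$. First I would use Theorem \ref{thm:homog} to reduce the problem to the study of $v^\beta/\sqrt{\beta^d}$. Dividing \eqref{eq:corrector-s1} by $\sqrt{\beta^d}$, the remainder $(u^\beta-u-v^\beta)/\sqrt{\beta^d}$ is bounded in $L^1(\Omega;L^2(K))$ by $C_{\beta,k,\sigma,\kappa_e}\,\beta^{2-d/2}\|u\|_{L^2(S)}$, which tends to $0$ since $\beta^{2-d/2}=\beta^{1/2}$ for $d=3$ and $\beta^{2-d/2}=\beta$ (up to the harmless $|\log(k\beta)|$ factor) for $d=2$. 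Hence this remainder converges to $0$ in probability in $L^2(K)$, and by Slutsky's theorem in the separable space $L^2(K)$ it suffices to show
\[
\frac{v^\beta}{\sqrt{\beta^d}} = -\sigma k^2\,\beta^{-d/2}\int_{\Rm^d} G(x,y)\chi(y)u(y)\,q^\beta(y)\,dy \xrightarrow[\beta\to0]{\mathrm{distribution}} v \quad\text{in } L^2(K).
\]

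To obtain convergence in distribution in the function space $L^2(K)$, I would establish (i) convergence of the finite-dimensional distributions and (ii) tightness of $\{v^\beta/\sqrt{\beta^d}\}$ in $L^2(K)$, which together yield weak convergence of the laws. For (i), testing against $\phi\in L^2(K)$ and invoking Cram\'er--Wold reduces each linear functional $\langle v^\beta/\sqrt{\beta^d},\phi\rangle$ to a scaled oscillatory integral $\beta^{-d/2}\int F(y)\,q(y/\beta)\,dy$ of the stationary field $q=q_r+iq_i$, with $F(y)=\chi(y)u(y)\int_K \overline{\phi(x)}\,G(x,y)\,dx\in L^2$ by the Green's function estimates established elsewhere in the paper. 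The heart of the matter is a multivariate CLT for such functionals. I would first show that their covariances converge, by the standard concentration-of-correlations computation (change of variables $y'=y+\beta h$ and integrability of the correlation under (A2)/(A4)), producing precisely the integrated correlations $\sigma_r^2,\sigma_i^2,\gamma$ assembled in $M$; a direct check shows that the resulting limiting conjugated and non-conjugated covariances $(\sigma_r^2+\sigma_i^2)\int F_1\overline{F_2}\,dy$ and $\int F_1 F_2\,[(\sigma_r^2-\sigma_i^2)+2i\gamma]\,dy$ coincide with those of the stochastic integral $v$ computed via the It\^o isometry for the complex multiparameter Wiener process \eqref{defWy}. Asymptotic Gaussianity would then follow from a cumulant (method of moments) argument: all cumulants of order $\ge 3$ of the scaled integrals vanish as $\beta\to0$, the higher joint cumulants of $q$ being controlled by the decorrelation rate through the maximal correlation function and assumption (A4), so that only the Wick pairings survive in the limit.

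For (ii), I would bound $\E\|v^\beta/\sqrt{\beta^d}\|_{H^s(K)}^2$ uniformly in $\beta$ for some $s>0$, using the variance bounds from step (i) together with the regularity and decay estimates on $G(x,y)$ and its $x$-derivatives for $x$ in the upper half-space; the compact embedding $H^s(K)\hookrightarrow L^2(K)$ and Markov's inequality then give tightness in $L^2(K)$. Combining (i) and (ii) yields a centered Gaussian limit, and the covariance identities above identify it with the field $v$, which completes the proof.

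The main obstacle, I expect, is the CLT in step (i): establishing asymptotic Gaussianity with only the maximal-correlation mixing hypothesis (A4) requires careful control of the high-order cumulants of $q$ over the unbounded sea-ice slab, and the complex-valued nature of the field makes the joint convergence of the real and imaginary parts, with the correct cross-covariance $\gamma$, a delicate piece of bookkeeping. Securing the uniform Sobolev bounds needed for tightness in the unbounded-domain Green's function setting is the secondary technical difficulty.
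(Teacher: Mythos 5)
Your proposal is correct and follows essentially the same route as the paper: reduction to $v^\beta/\sqrt{\beta^d}$ via Theorem \ref{thm:homog}, convergence in distribution in $L^2(K)$ via one-dimensional projections plus tightness, tightness from a uniform $H^s(K)$ moment bound and the compact embedding $H^s(K)\hookrightarrow L^2(K)$, and the same covariance identification with the complex Wiener integral built from $M^{1/2}$. The only difference is that where you propose a self-contained cumulant proof of the CLT for the scaled oscillatory integrals $\beta^{-d/2}\int A(y)Q^\beta(y)\,dy$, the paper simply adapts the corresponding results of \cite{GB-08,Bal-Jing}, so your ``main obstacle'' is precisely the ingredient the paper outsources.
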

The result above is a convergence in distribution of functions in $L^2$ (Hilbert) spaces, see section \ref{secasymp} and \cite{partha} for an introduction on the subject. In Theorem \ref{thm:clt}, the Wiener integrals in the limiting corrector are simply, after integration in $x$ against a test function, normal random variable with zero mean and appropriate variances. 
The first-order corrector $v$ is therefore a mean-zero Gaussian field, whose information is contained in correlations of the form
$$
\E\{v(x) v^*(y)\}= k^4 \tau^2 \int_{\Rm^d} G(x,z) G^*(y,z) |\chi(z) u(z)|^2 dz, 
$$
with $\tau^2 = \sigma^2(\sigma_r^2+\sigma_i^2)$. Above, we have used that
\be \label{varWien}
 \E\{ d W_x d W^*_y\}= \mathrm{tr}(M)  \delta(x-y) = (\sigma_r^2 + \sigma_i^2) \delta(x-y). 
 \ee

In the next section, we approximate these correlations in the high frequency limit $\eta \to 0$, assuming the condition $C_{\beta,k,\sigma,\kappa_e} \beta^2\ll 1$ holds for the corrector result to be true.

\subsection{The high frequency limit} \label{sechigh}

With Theorems \ref{thm:homog} and \ref{thm:clt} at hand, we approximate the wavefield $u^\beta$ by $u^\beta_0=u+\beta^{d/2} v$, where $v$ is formally the solution to, recalling that $k=2\pi /\eta$,
\be \label{eqlimitv}
    \eta^2 \Delta v(x)+ (k^2(x)+i \eta \mu(x))  v(x)= -(2\pi)^2 \chi(x) u(x) dW_x,
    \ee
where we have introduced 
$$
k^2(x)=\left\{
    \begin{array}{ll}
      k_0^2=(2 \pi)^2  \underline n_0^2& z >0\\
      k_e^2=(2 \pi)^2\underline n^2_e & z \in (-L,0)\\
      k_2^2=(2 \pi)^2\underline n_2^2 & z<-L
    \end{array}
  \right. \quad
\mu(x)=\left\{
    \begin{array}{ll}
       \mu_0=(2 \pi)^2 \alpha^0& z >0\\
      \mu_e=(2 \pi)^2 (\kappa_e^0 +\alpha^0)& z \in (-L,0)\\
      \mu_2=(2 \pi)^2 (\kappa_2^0 +\alpha^0)& z<-L,
    \end{array}
    \right.
  $$
with $\alpha^0=\alpha/\eta$, $\kappa^0_e=\kappa_e/\eta$, $\kappa^0_2=\kappa_2/\eta$.

The main tool to study the high frequency limit $\eta \to 0$ of \fref{eqlimitv} is the Wigner transform, defined by, for a function $w$:
$$
\calW[w](x,p)=\frac{1}{(2 \pi)^d} \int_{\Rm^d} e^{i x \cdot p} w(x-\frac{\eta}{2} y)w^*(x+\frac{\eta}{2} y) dy.
$$
See \cite{LP,GMMP} for an introduction on Wigner transforms. Rigorous mathematical analyses with Wigner transforms are typically difficult and technically involved, all the more in domains with sharp interfaces, and are beyond the scope of this work. We hence decided to remain at a formal level in the present section. We refer to \cite{fouassier,miller-JMPA} for rigorous derivations.

The correlation $\E\{u_0^\beta(x) (u_0^\beta)^*(y)\}$ can be written in terms of $\calW[v]$: indeed, since $\E\{v\}=0$, we have 
    \be \label{wigcorrec}
\E \{ \calW[u^\beta_0]\}=\calW[u]+\beta^d \E \{\calW[v]\},
\ee
and an inverse Fourier transform yields

$$
\E\{u_0^\beta(x)(u_0 ^\beta)^*(y)\}=u(x)u^*(y)+\beta^d \int_{\Rm^d} e^{-i \frac{(x-y) \cdot p}{\eta}}\E\{\calW[v]\}\left(\frac{x+y}{2},p\right) dp.
$$
Our next result provides us with an asymptotic description of $\E \{\calW[v]\}$ as $\eta \to 0$. Before stating it, we need to introduce a few notations. For $|\xi| \leq k_e$, let $k_j(\xi)=(k_j^2-|\xi|^2)^{1/2}$, for $j=\{0,e,2\}$. We recall that $k_0<k_e<k_2$, and we set $k_0(\xi)=0$ when $|\xi|>k_0$. The reflection-transmission amplitudes at the interfaces $z=0$ (associated to $j=0$) and $z=-L$ (associated to $j=2$) are defined by

$$
R_j(\xi)=\left| \frac{k_e(\xi)-k_j(\xi)}{k_e(\xi)+k_j(\xi)} \right|^2, \qquad T_j(\xi)=\frac{4 k^2_e(\xi)}{(k_e(\xi)+k_j(\xi))^2}.
$$
The boundary values of a function $\calW(x,p)$ at these interfaces are denoted as follows: write $x=(x',z)$, $x' \in \Rm^{d-1}$, and $k=(k_\perp,k_z)$. Then,
$$
\calW_0^{\pm}(x',k_\perp,k_z):=\calW((x',0^\pm),(k_\perp,k_z)), \qquad \calW_L^{\pm}(x',k_\perp,k_z):=\calW((x',-L^\pm),(k_\perp,k_z)),
$$
where the upperscript $\pm $ refers to the upper/lower limit. We have then the following theorem.

    \begin{theorem} \label{th:HF} We have $\E\{\calW[v]\}=\calW+o(\eta)$, where $o(\eta)$ represents a term that converges to zero as $\eta \to 0$ in the distribution sense, and where $\calW$ satisfies
$$
( \mu(x)+ p \cdot \nabla_ x)\calW(x,p)=\frac{\pi (2\pi)^4 \tau^2}{\eta^{d+1}}\delta(|p|^2-k_e^2) |\chi u|^2(x),
$$
equipped with the following reflection-transmission conditions at the interface $z=0$,

$$
\begin{array}{ll}
\textrm{when} \qquad |k_\perp|<k_0 &
\qquad \left\{ \begin{array}{l}
\calW_0^{+}(x',k_\perp,k_0(k_\perp))=T_0(k_\perp) \calW_0^{-}(x',k_\perp,k_e(k_\perp))\\
\calW_0^{-}(x',k_\perp,-k_e(k_\perp))=R_0(k_\perp) \calW_0^{-}(x',k_\perp,k_e(k_\perp))
\end{array}\right.\\[4mm]
\textrm{when} \qquad |k_\perp|>k_0 &\qquad \calW_0^{-}(x',k_\perp,-k_e(k_\perp))=\calW_0^{-}(x',k_\perp,k_e(k_\perp)),
\end{array}
$$
and at $z=-L$,
\begin{align*}
&\calW_{L}^{+}(x',k_\perp,k_e(k_\perp))=R_2(k_\perp) \calW_L^{+}(x',k_\perp,-k_e(k_\perp))\\
&\calW_{L}^{-}(x',k_\perp,-k_2(k_\perp))=T_2(k_\perp) \calW_L^{+}(x',k_\perp,-k_e(k_\perp))
\end{align*}

      \end{theorem}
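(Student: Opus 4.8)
The plan is to obtain the transport equation by first deriving the equation satisfied by the random Wigner transform $\calW[v]$ directly from \eqref{eqlimitv}, and then taking the expectation and letting $\eta\to0$. Writing \eqref{eqlimitv} at the two shifted points $x-\frac{\eta}{2}y$ and $x+\frac{\eta}{2}y$, multiplying the first equation by $v^*(x+\frac{\eta}{2}y)$ and the complex conjugate of the second by $v(x-\frac{\eta}{2}y)$, and subtracting, the two copies of $k^2(x)$ cancel in the interior of each layer because $k^2$ is piecewise constant there. Using the identity $v^*(x+\frac{\eta}{2}y)\nabla_x v(x-\frac{\eta}{2}y)-v(x-\frac{\eta}{2}y)\nabla_x v^*(x+\frac{\eta}{2}y)=-\frac{2}{\eta}\nabla_y\big(v(x-\frac{\eta}{2}y)v^*(x+\frac{\eta}{2}y)\big)$, the Laplacian contribution becomes, after the Fourier transform in $y$ (under which $\nabla_y\mapsto-ip$), equal to $2i\eta\,p\cdot\nabla_x\calW[v]$, while the absorption $i\eta\mu$ produces $2i\eta\,\mu(x)\calW[v]+O(\eta^2)$. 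Dividing by $2i\eta$, the interior equation reads $(\mu(x)+p\cdot\nabla_x)\calW[v]=\frac{1}{2i\eta}\calF_{y\to p}\big(v_+^*S_--v_-S_+^*\big)+O(\eta)$, with $v_\pm=v(x\pm\frac{\eta}{2}y)$ and $S=-(2\pi)^2\chi u\,dW_x$ the source of \eqref{eqlimitv}; this already produces the claimed transport operator on the left-hand side.

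To identify the right-hand side I take the expectation of $\frac{1}{2i\eta}\calF_{y\to p}\big(v_+^*S_--v_-S_+^*\big)$. Substituting the representation $v=\int G_\eta(\cdot,c)\,dS(c)$, where $G_\eta$ is the Green's function of the operator in \eqref{eqlimitv} and $v$ is the Gaussian field of Theorem \ref{thm:clt}, and using the white-noise correlation \eqref{varWien}, the double integral collapses onto the diagonal; reciprocity $G_\eta(a,b)=G_\eta(b,a)$ and $|\chi u|^2(x\pm\frac{\eta}{2}y)=|\chi u|^2(x)+O(\eta)$ then reduce the source to $-\frac{(2\pi)^4\,\mathrm{tr}(M)}{\eta}\,|\chi u|^2(x)\,\calF_{y\to p}\big(\Im G_\eta(x+\tfrac{\eta}{2}y,x-\tfrac{\eta}{2}y)\big)$. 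Since $G_\eta=\eta^{-2}G^{(\kappa)}$ with $G^{(\kappa)}$ the Helmholtz Green's function at wavenumber $\kappa=k_e/\eta$ inside the ice, and since $\Im\widehat{G^{(\kappa)}}(\zeta)=\pi\,\delta(|\zeta|^2-\kappa^2)$ is the on-shell spectral measure, the change of variables $r=\eta y$ concentrates this Fourier transform on $\{|p|^2=k_e^2\}$ and assembles the powers $\eta^{-2}$ (from inverting $\eta^2\Delta$) and $\eta^{-d}$ (from the rescaling) into the prefactor $\eta^{-(d+1)}$. Tracking the numerical constants, and recalling $\mathrm{tr}(M)=\sigma_r^2+\sigma_i^2$ together with the source amplitude, yields $\frac{\pi(2\pi)^4\tau^2}{\eta^{d+1}}\delta(|p|^2-k_e^2)|\chi u|^2(x)$ with $\tau^2=\sigma^2(\sigma_r^2+\sigma_i^2)$, as claimed.

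The reflection–transmission conditions come from the terms $k^2(x-\frac{\eta}{2}y)-k^2(x+\frac{\eta}{2}y)$ dropped above, which do not vanish across a jump of $k^2$ and create a boundary layer for $\calW[v]$ at $z=0$ and $z=-L$. To resolve it, I would freeze the transverse wavenumber $k_\perp$ and decompose the field near each flat interface into up- and down-going plane waves, matched by continuity of $v$ and $\partial_z v$; this produces the Fresnel amplitudes $r_j=\frac{k_e(\xi)-k_j(\xi)}{k_e(\xi)+k_j(\xi)}$ and $t_j=\frac{2k_e(\xi)}{k_e(\xi)+k_j(\xi)}$, whose flux-normalized squares are precisely the $R_j$ and $T_j$ of the statement. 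In the limit $\eta\to0$, the cross terms between distinct modes in the Wigner transform oscillate and average to zero, so the phase-space densities of the outgoing components equal those of the incoming ones weighted by $R_j$ and $T_j$, which is the content of the boundary relations. At $z=0$ the index drops ($k_e>k_0$), so a propagating mode exists in the air only when $|k_\perp|<k_0$; when $|k_\perp|>k_0$ one has $k_0(\xi)=0$, total internal reflection occurs, and all energy returns into the ice, which explains the dichotomy. At $z=-L$ the index increases ($k_2>k_e$), transmission always occurs, and no dichotomy is needed.

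It remains to check that the discarded contributions are genuinely $o(\eta)$ in the distribution sense: in the interior this follows from the $O(\eta^2)$ Taylor remainders of $k^2$ and $\mu$ together with uniform bounds on $\calW[v]$ in a suitable dual space, in the spirit of the Wigner-measure compactness theory \cite{GMMP,LP}. The main obstacle is clearly the interface analysis: justifying the mode decomposition, the vanishing of the cross terms, and the correct treatment of the evanescent modes near a sharp discontinuity of the coefficients is delicate, since the Wigner calculus is ill-behaved across such interfaces. This is exactly why I would keep the derivation formal and defer the fully rigorous treatment of reflection and transmission of Wigner measures at interfaces to \cite{fouassier,miller-JMPA}.
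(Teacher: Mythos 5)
Your proposal is correct at the same formal level as the paper's own argument, but it is organized along a genuinely different route. For the interior equation, the paper localizes with a test function $\varphi$ supported inside the layer and works entirely in Fourier space: it writes $\hat v_\varphi(\xi)=(\hat s_1(\xi)+\eta^2\hat s_2(\xi))/a_\eta(\xi)$ with $a_\eta(\xi)=k_e^2+i\eta\mu_e-\eta^2|\xi|^2$, takes the expectation of $\hat v_\varphi(\xi)\hat v_\varphi^*(\xi')$ using the white-noise statistics of the source, and produces $\delta(|p|^2-k_e^2)$ from a partial-fraction decomposition of $1/\bigl(a_\eta(\tfrac q2+\tfrac p\eta)a_\eta^*(\tfrac q2-\tfrac p\eta)\bigr)$ combined with the Plemelj-type limit $\lim_{\eta\to0}\bigl(\tfrac{1}{x+i\eta}-\tfrac{1}{x-i\eta}\bigr)=-2i\pi\delta(x)$. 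You instead derive the transport operator $(\mu+p\cdot\nabla_x)$ first, by the two-point (Liouville-type) manipulation in physical space, and then identify the expected source via the Green's function representation of $v$, reciprocity, and the on-shell spectral measure of the Helmholtz resolvent; this is the same mechanism in disguise (in both cases the delta function is the imaginary part of the constant-coefficient resolvent), and your power counting $\eta^{-2}\cdot\eta^{-d}\cdot\eta^{2}\cdot\eta^{-1}=\eta^{-(d+1)}$ reproduces the paper's prefactor $\pi(2\pi)^4\tau^2/\eta^{d+1}$ exactly. What your route buys is physical transparency and a source computation that would survive a $z$-dependent smooth index inside the layer; what the paper's route buys is that the full $(q,p)$ dependence, including the factor $\mu_e+ip\cdot q$, comes out in one algebraic step. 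On the interfaces you are actually more explicit than the paper, which gives no derivation and only invokes an adaptation of \cite{CPDE-half}; your plane-wave/Fresnel matching with vanishing cross terms is the standard way to obtain these conditions. One small slip there: the paper's $T_j=4k_e^2/(k_e+k_j)^2$ is the plain squared amplitude $|t_j|^2$ of the Fresnel transmission coefficient, not a flux-normalized quantity (flux normalization would give $4k_ek_j/(k_e+k_j)^2$, with $R_j+T_j=1$), so your formulas agree with the statement but the label ``flux-normalized'' does not. Both you and the paper leave the interior remainders and the sharp-interface Wigner analysis at the formal level, deferring rigor to the same literature, so there is no gap relative to the paper's own standard.
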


\bigskip 

The (formal) proof is direct and follows the lines of \cite{BR-AM}. The interpretation of Theorem \ref{th:HF} is the following: as the homogenized solution $u$ propagates in the sea ice layer, it interacts with heterogeneities at scales much smaller than the wavelength. This results in an isotropic radiation (modeled by the delta function $\delta(|p|^2-k_e^2)$) with intensity $|u(x)|^2$, which then propagates according to ray theory, and is refracted at the interface between the air and sea ice and the interface between sea ice and seawater following Snell-Descartes laws. Note that, even though the refractive index is complex in the sea ice and seawater, we still obtain the usual Snell-Descartes laws. This is because the absorption is small compared to the real part of the index. When this is not the case, Snell-Descartes laws have to me modified, see \cite{ray-complex}. Moreover, we do not need to consider the critical case $|k_\perp|=k_0$ in the boundary conditions since this situation corresponds to a set of measure zero in $\Rm^d$. There is no critical case at the bottom interface since the real part of the refractive index in seawater is larger than the effective index in sea ice, and therefore $k_2> k_e$.

Note that the corrector in \fref{wigcorrec} is of order $\beta^d \tau^2/ \eta^{d+1}$. Since $\tau \sim \sigma$, it is of order $\eps^d \sigma^2 / \eta$ in terms of $\eps$. Recalling that we are in a regime where $\sigma \geq 1$ and $\eps \sigma \ll \eta^{5/4}$ when $d=3$, and $\eps \sigma^{3/2} \ll \eta^{3/2}$ when $d=2$, we find $\eps^3 \sigma^2 / \eta \ll \sigma^{-1} \eta^{11/4}$ when $d=3$, and $\eps^2 \sigma^2 / \eta \ll \sigma^{-1} \eta^{2}$ when $d=2$. In both cases, the corrector is therefore small.

Using the stationarity of the random medium, the correlations can be calculated as follows. Recalling the form of the source term \fref{source}, and indexing $u^\beta$ as $u^\beta_t$, we have, invoking ergodicity, 
$$
\E\{u^\beta(x) (u^\beta)^*(y)\}=\lim_{T\to \infty} \frac{1}{T} \int_0^T u^\beta_t(x) (u^\beta_t)^*(y) dt.
$$
From a practical viewpoint, what is measured is 
$$
C(x,y):=\frac{1}{T_0} \int_0^{T_0} u^\beta_t(x) (u^\beta_t)^*(y) dt,
$$
for some $T_0$ sufficiently large and a few frequencies $\omega$ in the bandwidth. The correlation $C(x,y)$ is asymptotically modeled by 
$$
C_0(x,y):=u(x)u^*(y)+\beta^d \int_{\Rm^d} e^{-i \frac{(x-y) \cdot p}{\eta}}\calW\left(\frac{x+y}{2},p\right) dp.
$$
The next step is the resolution of an inverse problem with data $C(x,y)$ at the detector and model $C_0(x,y)$, with the goal of recovering some information about the sea ice,  namely the thickness of the layer and the homogenized coefficients. This will be addressed in future works.

The next section is devoted to generalizations of Theorems \ref{thm:homog} and \ref{thm:clt}.

\subsection{Generalizations} \label{secgen}

A multi-layer configuration can be considered as follows: suppose the sea ice consists of $N_L$ layers defined by
$$
L_i=\{ (x',z)\in \Rm^d: \; \ell_i(x')<z<\ell_{i+1}(x')\}, \qquad i=1,\cdots,N_L,
$$
and assume there are $N^i_H$ different types of heterogeneities in the layer $i$. We then define the new random field by
$$
V=\sum_{i=1}^{N_L} \sum_{j=1}^{N_H^i} \chi_i V_{ij},
$$
where the $V_{ij}$ are independent and have a typical scale much smaller than the wavelength.  We set $\ell_c$ to be the largest of these scales. Above, $\chi_i$ is the characteristic function of $L_i$. When the functions $\ell_i$ vary slowly compared to the scale $\beta=\ell_c/H$, i.e. $\max_{x' \in \Rm^{d-1}} |\ell'_i(x')| \ll \beta^{-1} $ in rescaled variables, then the situation is essentially similar to the flat interface case since the scales of variation of the surfaces and of the heterogeneities are well separated. It is then expected that, asymptotically in $\beta$,
\be \label{asgene}
u^\beta  \simeq u+ \beta^{d/2}v, \qquad v=-k^2 \sum_{i=1}^{N_L}\sum_{j=1}^{N^i_H} \int_{\Rm^d} G(x,y) \chi_i(y)u(y) dW^{(ij)}_y.
\ee
In the latter, $u$ and $G$ are the solution and the Green's function of the multi-layers multi-species homogenized Helmholtz equation, and $dW^{ij}_y$ has the form \fref{defWy}, where the $\alpha$ coefficients are the entries of square root of the matrix $M^{ij}$, which is defined in a similar way as in \fref{defM}.

Non-flat interfaces can also be considered; suppose that each interface profile consists of a slowly varying part, i.e.\,varying on scales much larger compared to the wavelength, augmented with a highly oscillatory (possibly random) part. When the oscillations are much faster than the wavelength, homogenization theory predicts a limit model where the rough boundary is replaced by the slow varying profile that is surrounded by a boundary layer where an equation with effective coefficients is solved. Outside of this layer, the original equation is satisfied with appropriate continuity conditions. See e.g. \cite{keller-boundary} for more details and the references therein. In terms of \fref{asgene}, this means that $u$ and $G$ are replaced by some homogenized versions $u_e$ and $G_e$. When the oscillations of the rough boundary are of the same order as the wavelength, the situation is more complicated. Nevertheless, if the amplitude of the fast fluctuations is sufficiently small, a transport model for $u(x)u^*(y)$ can be obtained provided the high frequency hypothesis holds in the layer of interest. See \cite{BPR-1} for more details. One of our next objectives is to compare numerically these various boundary effects with the volume scattering modeled the corrector $v$.

The rest of the paper is devoted to the proofs of our main results.

\section{Proofs of the homogenization and corrector results}
We start with some preliminaries about Green's functions. 
\subsection{The Green's function of the Helmholtz equation}

We derive some estimates for the Green's function $G(x,y)$ of the homogenized equation, i.e. the solution to
\begin{equation}
\label{eq:homG}
\Delta_x G(x,y) + k^2\underline n^2(x)G(x,y)  = \delta_y, \qquad x,y \in \Rm^d,
\end{equation}
where $\underline n$ is defined in \fref{defnH}, by exploiting the layered structure of the underlying medium and the physical absorption in the middle layer (sea ice). Note that the reciprocal property $G(x,y)=G(y,x)$ holds. Let $\Phi = \Phi(x,y)$ be the Green's function in free space, that is
\begin{equation}
\label{eq:FreeG}
\Delta_x \Phi + k^2 \underline n_e^2 \Phi = \delta_y, \qquad x,y \in \Rm^d,
\end{equation}
where $\underline n_e $ is the homogenized complex refractive index of sea ice, see \fref{defnH}. The function $\Phi$ admits the following explicit formulas:
\begin{equation*}
\begin{aligned}
&\Phi(x) =  \frac{e^{i k \underline n_e|x|}}{4 \pi|x|}, \qquad \textrm{for } d = 3,\\
&\Phi (x) = \frac{i}{4} H_0^{(1)}(k \underline n_e |x|), \quad \textrm{for } d = 2,
\end{aligned}
\end{equation*}
where $H_0^{(1)}$ is the zero order Hankel function of the first kind.  We have the proposition below.

\begin{proposition} \label{propG}Consider the Green's function $G(x,x_0)$ solution to \fref{eq:homG} and let $K$ be a bounded set in the upper half space $\Rm_+^d=\{ x=(x',z) \in  \Rm^{d}: \; z>0\}$. With \fref{defnH}, suppose that $n_e>n_0$ and that $n_2>n_0$. Set $p(x,x_0) = G(x,x_0)-\Phi(x-x_0)$ for $x_0 \in S$. Then there exists a generic constant $C$ independent of $k$ and $\kappa_e$ (and $\alpha$) such that
\be \label{plemma}
\sup_{x_0 \in S}\|p(\cdot,x_0)\|_{L^2(S)} \leq C k^{d/2-2} \kappa_e^{-1},
\ee
we have the following $L^\infty$ estimate
\be \label{plemma2}
\sup_{x_0 \in S}\|p(\cdot,x_0)\|_{L^\infty(S)} + \sup_{x_0 \in S}\|p(\cdot,x_0)\|_{L^\infty(K)}\leq C k^{d-2} \kappa_e^{-1},
\ee
and $G$ verifies
\be \label{Glemma}
\sup_{x_0 \in S}\|G(\cdot,x_0)\|_{L^2(S)} + \sup_{x_0 \in K}\|G(\cdot,x_0)\|_{L^2(S)} \leq C k^{d/2-2} \kappa_e^{-1}.
\ee
Moreover, for any $s < (4-d)/2$, there exists another constant $C$ that depends on $s$, $k$ and $\kappa_e$ such that
\be \label{estHsG}
\sup_{y \in S}\| G(\cdot,y)\|_{H^s(K)} \leq C,
\ee
where $H^s$ is the usual Sobolev space.

\end{proposition}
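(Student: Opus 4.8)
The plan is to exploit the translational invariance of the layered medium in the transverse variable $x'$ and reduce \eqref{eq:homG} to a family of one-dimensional problems indexed by the transverse frequency. Writing $G(x,x_0)=(2\pi)^{-(d-1)}\int_{\Rm^{d-1}}e^{i\xi\cdot(x'-x_0')}\hat G(\xi;z,z_0)\,d\xi$, and similarly for $\Phi$ and $p$, the transverse Fourier transform turns \eqref{eq:homG} into the scalar ODE $\partial_z^2\hat G+(k^2\underline n^2(z)-|\xi|^2)\hat G=\delta(z-z_0)$ with piecewise constant coefficients. Introducing the vertical wavenumbers $\mu_j(\xi)=\sqrt{k^2\underline n_j^2-|\xi|^2}$ (principal root) for $j\in\{0,e,2\}$, I would solve this ODE explicitly by mode matching: decaying/outgoing exponentials $e^{i\mu_0 z}$ and $e^{-i\mu_2 z}$ in the air and seawater, and the free kernel $\hat\Phi=e^{i\mu_e|z-z_0|}/(2i\mu_e)$ plus reflected waves $e^{\pm i\mu_e z}$ in $S$. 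Imposing continuity of $\hat G$ and $\partial_z\hat G$ at $z=0$ and $z=-L$ determines all amplitudes in closed form through the Fresnel coefficients $r_j=(\mu_e-\mu_j)/(\mu_e+\mu_j)$ and the Fabry--Perot denominator $\Delta(\xi)=1-r_0 r_2 e^{2i\mu_e L}$; subtracting $\hat\Phi$ then isolates $\hat p$, whose source sits outside $S$ so that $\hat p$ carries no free-space singularity.

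The estimates rest on two pointwise facts. First, for the principal root one has $\Im\mu_e\ge k^2(\kappa_e+\alpha)/(2|\mu_e|)$ everywhere, and $\Im\mu_e\gtrsim(|\xi|^2-k^2 n_e^2)^{1/2}$ in the evanescent range $|\xi|>k n_e$; this gives the $z$-decay $e^{-\Im\mu_e|z-z_0|}$ uniformly in the artificial absorption $\alpha$. Second, the Fresnel coefficients obey $|r_j|\le 1$ and, crucially, $|r_j|\lesssim k^2/|\xi|^2$ at high frequency since $\mu_e-\mu_j=O(|\xi|^{-1})$; moreover the physical absorption $\kappa_e>0$ together with $n_e>n_0$, $n_2>n_0$ forces $|r_0 r_2 e^{2i\mu_e L}|<1$ and bounds $|\Delta(\xi)|$ away from zero, ruling out Fabry--Perot resonances. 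From these I read off $|\hat G|,|\hat\Phi|\lesssim|\mu_e|^{-1}e^{-\Im\mu_e|z-z_0|}$, while the high-frequency decay of the $r_j$ gives $\hat p$ an extra factor $|\xi|^{-2}$, quantifying the smoothness of $p$ away from the diagonal.

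For \eqref{plemma} and \eqref{Glemma} I would use Plancherel in $x'$, $\|G(\cdot,x_0)\|_{L^2(S)}^2=(2\pi)^{-(d-1)}\int_{\Rm^{d-1}}\|\hat G(\xi;\cdot,z_0)\|_{L^2(-L,0)}^2\,d\xi$, bound the inner integral by $\min(L,c/\Im\mu_e)\,|\mu_e|^{-2}$, and split the $\xi$-integral into the propagating part $|\xi|<k n_e$ (where $\Im\mu_e\gtrsim k^2\kappa_e/|\mu_e|$ and the regularization $|\mu_e|^2\ge k^2\kappa_e$ tames the turning point) and the evanescent part (where $\Im\mu_e\gtrsim|\xi|$ makes the tail converge); tracking the powers of $k$ and $\kappa_e$ then yields the stated bounds, and the term with source $x_0\in K$ follows from the reciprocity $G(x,x_0)=G(x_0,x)$ and the same transmission estimate across $z=0$. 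The $L^\infty$ bounds \eqref{plemma2} I would obtain from $|p(x,x_0)|\le(2\pi)^{-(d-1)}\int_{\Rm^{d-1}}|\hat p|\,d\xi$, whose convergence is guaranteed precisely by the $|\xi|^{-2}$ gain in the reflection coefficients.

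Finally, for \eqref{estHsG} the point is that the transmitted part of $\hat G$ has, at large $|\xi|$, the same asymptotics as the free-space kernel $\hat\Phi$ (the transmission coefficient tends to $1$), so $G(\cdot,y)|_K$ inherits exactly the local singularity $|x-y|^{-(d-2)}$ of $\Phi$ (a logarithm when $d=2$) as $y$ approaches the interface; since a cut-off fundamental solution lies in $H^s_{loc}$ iff $s<(4-d)/2$ --- equivalently $\int_{\Rm^d}(1+|\zeta|^2)^s|\zeta|^{-4}\,d\zeta<\infty$ at high frequency --- this is exactly the claimed range. I expect the main obstacle to be the turning-point region $|\xi|\approx k n_e$, where $\mu_e$ nearly vanishes: there one must use the $O(k^2\kappa_e)$ imaginary part to regularize simultaneously the amplitude $|\mu_e|^{-1}$, the $z$-decay, and the denominator $\Delta(\xi)$, all while keeping the constants independent of $\alpha$ and explicit in $k$ and $\kappa_e$. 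Controlling $|\Delta(\xi)|^{-1}$ uniformly over all regimes --- in particular for modes evanescent in the air but propagating in the ice --- is the delicate step, and is where the hypotheses $n_e>n_0$, $n_2>n_0$ and $\kappa_e>0$ enter decisively.
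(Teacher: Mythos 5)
Your overall strategy---transverse Fourier transform, reduction to a one-dimensional ODE in $z$, then Plancherel for the $L^2$ bounds and an $L^1_\xi$ bound for the $L^\infty$ bounds---shares its skeleton with the paper, but the core of your argument is genuinely different: you propose to solve the three-layer ODE \emph{explicitly} by mode matching, expressing $\hat G$ through Fresnel coefficients $r_j=(\mu_e-\mu_j)/(\mu_e+\mu_j)$ and the Fabry--Perot denominator $\Delta(\xi)=1-r_0r_2e^{2i\mu_e L}$. The paper deliberately avoids this (it even says so): it derives outgoing impedance conditions $\partial_z\hat G(0,\xi)=ik_0\hat G(0,\xi)$, $\partial_z\hat G(-L,\xi)=-ik_2\hat G(-L,\xi)$, and then runs an energy argument on $\hat p$ in $(-L,0)$ (multiply by $\hat p^*$, integrate by parts, take real and imaginary parts), which bounds $|\hat p(0)|$, $|\hat p(-L)|$ and $\|\hat p\|_{L^2(-L,0)}$ directly from the boundary data, never forming---hence never having to invert---the resonance denominator. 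That choice is not cosmetic: it is what makes the constants trackable in $k$, $\kappa_e$, $\alpha$, and it extends to $z$-dependent indices in the layer.

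The genuine gap in your proposal sits exactly where you locate the ``delicate step,'' and it invalidates the estimates as written rather than merely postponing them. Your claim that $\kappa_e>0$ together with $n_e>n_0$, $n_2>n_0$ ``bounds $|\Delta(\xi)|$ away from zero'' is false \emph{uniformly in $k$ and $\kappa_e$}, and uniformity is the whole point of the proposition (the constant $C$ must be independent of $k$, $\kappa_e$ and $\alpha$). Near the turning point $|\xi|\approx kn_e$ one has $\mu_e\approx k\sqrt{\kappa_e}\,e^{i\pi/4}$, hence $|r_0|=1-O(\sqrt{\kappa_e})$, $|r_2|=1-O(\sqrt{\kappa_e})$ and $e^{-2\Im\mu_e L}=e^{-O(k\sqrt{\kappa_e})L}$, so the worst-case lower bound (when the phase of $r_0r_2e^{2i\mu_e L}$ aligns, i.e.\ at a cavity resonance) is only $|\Delta|\gtrsim \sqrt{\kappa_e}+\min(1,k\sqrt{\kappa_e})$; over the whole propagating range one only gets $|\Delta|\gtrsim\min(1,k\kappa_e)$. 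Thus $|\Delta|^{-1}$ blows up as $\kappa_e\to0$ at fixed $k$. Consequently your pivotal pointwise bound $|\hat G|\lesssim|\mu_e|^{-1}e^{-\Im\mu_e|z-z_0|}$, from which you ``read off'' \fref{plemma}--\fref{Glemma}, does not follow: the multiply-reflected terms in the explicit solution carry the uncontrolled factor $|\Delta|^{-1}$. To complete your route one would have to show that these $|\Delta|^{-1}$ losses are compensated by the accompanying numerators (factors $1-r_j$, the smallness of $k^2n_e^2-|\xi|^2$ at the turning point, the decay $e^{-2\Im\mu_e L}$) so as to land on exactly $Ck^{d/2-2}\kappa_e^{-1}$ and $Ck^{d-2}\kappa_e^{-1}$; that bookkeeping is the actual proof, and it is absent. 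Separately, your argument for \fref{estHsG} is heuristic (``$G$ inherits the singularity of $\Phi$''); the paper makes this precise by writing $G=\Phi+p$, checking $\Phi\in H^s$ for $s<(4-d)/2$ directly on the Fourier side, and upgrading $p$ by elliptic regularity from the equation $\Delta_xp+k^2\underline n^2p=k^2(\underline n_e^2-\underline n^2)\Phi$, using the already-established $L^\infty$ bound \fref{plemma2}.
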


The proof is postponed to section \ref{secpropG}. When $d=2$, we will use the result below, proved in section \ref{proofG2d}:

\begin{lemma} \label{G2d}
  For $d=2$, we have the  estimate, 
  $$
  |\Phi(x)| \leq C +C|\log \big(k\Re(\underline n_e) |x|\big)|, \qquad \forall x \in \Rm^2,
  $$
where $C>0$ does not depend on $k$ and $\underline n_e$.
\end{lemma}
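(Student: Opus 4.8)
The plan is to reduce the statement to the standard small- and large-argument bounds on the Hankel function $H_0^{(1)}$, applied on the fixed sector of the complex plane in which the argument $z=k\underline n_e|x|$ lives. Recall $\Phi(x)=\frac i4 H_0^{(1)}(z)$. Since $\underline n_e^2=n_e^2+i(\kappa_e+\alpha)$ lies in the open first quadrant and we use the principal square root, $\underline n_e$ has argument in $(0,\pi/4)$; hence $z$ ranges over a fixed closed subsector of the upper half-plane, with $\Im z\ge 0$ and $\arg z$ bounded away from the real and negative real axes, uniformly in $k$ and $\underline n_e$. Writing $r=k|x|$ and $a=\Re\underline n_e$, the quantity appearing in the claimed bound is $ra=k\Re(\underline n_e)|x|$, and the two elementary facts I will use repeatedly are $|z|=r|\underline n_e|\ge ra$ (because $|\underline n_e|\ge a$) and $\Im z\ge 0$.

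First I would dispose of the regime $|z|\le 1$. Here I use the convergent expansions $H_0^{(1)}=J_0+iY_0$, with $J_0$ entire and $J_0(0)=1$, and $Y_0(z)=\frac2\pi(\log(z/2)+\gamma)J_0(z)+(\text{entire vanishing at }0)$. Because $\arg z$ is bounded, the complex logarithm obeys $|\log(z/2)|\le |\log|z||+C$, while the entire contributions are bounded for $|z|\le 1$; this yields $|H_0^{(1)}(z)|\le C(1+|\log|z||)$ with $C$ absolute. Since $|z|\le 1$ forces $ra\le|z|\le 1$, and since $\log(1/|z|)\le\log(1/(ra))$ by $|z|\ge ra$, I obtain $|H_0^{(1)}(z)|\le C(1+|\log(ra)|)$ in this regime.

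Next, the regime $|z|\ge 1$. Here I invoke the uniform large-argument asymptotics $H_0^{(1)}(z)=\sqrt{2/(\pi z)}\,e^{i(z-\pi/4)}(1+O(1/|z|))$, valid uniformly in any closed sector avoiding the negative real axis, in particular on $\arg z\in[0,\pi/2]$. This gives $|H_0^{(1)}(z)|\le C|z|^{-1/2}e^{-\Im z}$, and since $|z|\ge 1$ and $\Im z\ge 0$ the right-hand side is bounded by an absolute constant $C$. As $1+|\log(ra)|\ge 1$, this trivially yields $|H_0^{(1)}(z)|\le C(1+|\log(ra)|)$ as well; note this also covers the strong-absorption case, where $ra$ may be small while $|z|$ is large. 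Combining the two regimes and multiplying by $\frac14$ gives the claim.

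The argument is essentially routine once the Hankel asymptotics are in hand; the only points requiring care are (i) the uniformity in $k$ and $\underline n_e$, which hinges on $z$ staying in a fixed sector of the upper half-plane so that both bounds carry absolute constants, and (ii) the deliberate use of $\Re(\underline n_e)$ rather than $|\underline n_e|$ inside the logarithm, which is exactly what the inequality $|z|\ge k\Re(\underline n_e)|x|$ is there to accommodate in the small-argument regime.
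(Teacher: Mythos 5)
Your proof is correct, but it follows a genuinely different route from the paper's. The paper never splits into small- and large-argument regimes: it starts from the integral representation $H_0^{(1)}(z)=\frac1\pi\int_0^\pi e^{iz\sin t}\,dt-\frac{2i}{\pi}\int_0^\infty e^{-z\sinh t}\,dt$, valid for $z=k\underline n_e|x|$ in the open first quadrant, bounds the first integral by $1$ using $\Im z\ge 0$, bounds the portion $t\in(0,1)$ of the second integral by a constant, and extracts the logarithm from the tail $t\ge 1$ via the inequality $2\sinh t\ge e^t-e$ followed by an exponential-integral estimate; this yields the bound directly in terms of $\Re z=k\Re(\underline n_e)|x|$ in one stroke. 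You instead quote the classical series expansion of $Y_0$ near the origin and the uniform large-argument asymptotics of $H_0^{(1)}$ in a sector, split at $|z|=1$, and convert $|\log|z||$ into $|\log(k\Re(\underline n_e)|x|)|$ via $|z|\ge k\Re(\underline n_e)|x|$ in the small-$|z|$ regime, the large-$|z|$ regime being trivial since there $H_0^{(1)}$ is in fact bounded. What your route buys is reliance on standard citable facts and slightly sharper information in each regime; what the paper's buys is self-containedness and the $\Re z$ dependence appearing without any case analysis. One small inaccuracy in your write-up: $\arg z$ is \emph{not} bounded away from the positive real axis uniformly in $\underline n_e$ (it tends to $0$ as the absorption $\kappa_e+\alpha\to 0$); fortunately you never actually use that, only the containment $\arg z\in[0,\pi/4]$, which is what keeps both your logarithm bound and the sectorial asymptotics uniform in $k$ and $\underline n_e$.
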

In the rest of the paper, we will use the notation
$$
R(x-y):=\E\{ q(x) q^*(y) \}.
$$
Note that, due to assumption (A4) and a similar relation to \fref{quad}, $R$ is integrable over $\Rm^d$.

\subsection{Error estimates}
The goal of this subsection is to prove Theorem \ref{thm:homog}. The strategy of proof is an adaptation of that of \cite{Bal-Jing}. The starting point is the relation.
\begin{equation}
\label{eq:exp11}
u^\beta - u = -\cG k_\sigma^2 \chi q^\beta u + \cG k_\sigma^2 \chi q^\beta \cG k_\sigma^2 \chi q^\beta u +   \cG k_\sigma^2 \chi q^\beta  \cG k_\sigma^2 \chi q^\beta (u^\beta - u).
\end{equation} 
With the goal of controlling the last term on the right above, we get a first direct estimate for $u^\beta - u$ with the lemma below. We recall that $\chi$ is the characteristic function of the sea ice domain $S$. 
\begin{lemma} \label{estimsol} Let $g \in L^2(S)$ and $v$ satisfy
\begin{equation}
\label{eq:helm}
(\Delta + k^2  \underline n_\beta^2(x)) v= \chi g
\end{equation}
with $\inf_{x \in S} \Im (\underline n^2_\beta(x))=\kappa_m >0$.
Then, we have the estimate
$$
\| v \|_{L^2(S)} \leq k^{-2} \kappa_m^{-1}\|g\|_{L^2(S)}.
$$
\end{lemma}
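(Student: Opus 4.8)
The plan is to use a standard absorption (energy) estimate: test \fref{eq:helm} against $v^*$, integrate over all of $\Rm^d$, and extract the imaginary part so that the real Dirichlet energy $\int |\nabla v|^2$ drops out and only the positive absorption term survives on the left.

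First I would record the well-posedness/regularity input. Because of the artificial absorption $\alpha>0$ built into $\underline n_\beta^2$, the operator $\Delta + k^2\underline n_\beta^2$ is boundedly invertible on $L^2(\Rm^d)$, and the solution $v$ of \fref{eq:helm} with right-hand side $\chi g \in L^2(\Rm^d)$ belongs to $H^2(\Rm^d)$; in particular $v$ and $\nabla v$ decay at infinity in the $L^2$ sense, so integrating by parts on $\Rm^d$ produces no boundary contribution. Multiplying \fref{eq:helm} by $v^*$ and integrating then gives
\[
-\int_{\Rm^d} |\nabla v|^2 \, dx + k^2 \int_{\Rm^d} \underline n_\beta^2 |v|^2 \, dx = \int_S g\, v^* \, dx .
\]
Since the first term is real, taking imaginary parts yields
\[
k^2 \int_{\Rm^d} \Im(\underline n_\beta^2)\, |v|^2 \, dx = \Im \int_S g\, v^* \, dx .
\]

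Next I would exploit the sign structure of $\Im(\underline n_\beta^2)$, which equals $\alpha$ for $z>0$, is bounded below by $\kappa_m$ on $S$ by hypothesis, and equals $\kappa_2+\alpha$ for $z<-L$; all three are strictly positive. Hence the integral over $\Rm^d$ dominates its restriction to $S$, on which $\Im(\underline n_\beta^2)\ge \kappa_m$, so
\[
k^2 \kappa_m \, \|v\|_{L^2(S)}^2 \le k^2 \int_{\Rm^d} \Im(\underline n_\beta^2)\, |v|^2 \, dx = \Im \int_S g\, v^* \, dx \le \|g\|_{L^2(S)}\, \|v\|_{L^2(S)},
\]
the last step by Cauchy--Schwarz. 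Dividing by $\|v\|_{L^2(S)}$ (the estimate being trivial when $v\equiv 0$ on $S$) gives the claimed bound $\|v\|_{L^2(S)} \le k^{-2}\kappa_m^{-1}\|g\|_{L^2(S)}$.

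The only genuine point requiring care is the justification of the integration by parts on the unbounded domain, namely the absence of boundary terms at infinity; this is precisely where the artificial absorption enters, guaranteeing $v\in H^2(\Rm^d)$ with the requisite decay. Everything else is elementary, and the argument is in fact uniform in $\alpha$ since the right-hand side of the final estimate does not involve $\alpha$.
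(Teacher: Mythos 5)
Your proof is correct and follows essentially the same route as the paper's: multiply \fref{eq:helm} by $v^*$, integrate over $\Rm^d$, take the imaginary part so the Dirichlet term drops, use the strict positivity of $\Im(\underline n_\beta^2)$ (at least $\kappa_m$ on $S$, and $\alpha$ or $\kappa_2+\alpha$ elsewhere) to restrict to $S$, and finish with Cauchy--Schwarz. Your version is in fact slightly more careful than the paper's two-line argument, since you explicitly justify the integration by parts at infinity via the $H^2(\Rm^d)$ regularity afforded by the artificial absorption, and you state the restriction-to-$S$ step as an inequality rather than the paper's (slightly loose) equality.
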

\begin{proof}
Multiplying \fref{eq:helm} by $v^*$, integrating over $\Rm^{d}$ and taking the imaginary part leads to
$$
k^2 \kappa_m \|v\|^2_{L^2(S)} \leq k^2 \int_{S} (\kappa_e+\alpha+ \sigma q_i(x/\beta))|v(x)|^2 dx= \Im \int_{S} g(x) v^*(x) dx,
$$
and it suffices to use the Cauchy-Schwarz inequality to conclude.
\end{proof}

\bigskip

With $v^\beta=-\mathcal{G} k_\sigma^2 \chi q^\beta u$, we then note the following:
\begin{equation*}
(\Delta + k^2  \underline n_\beta^2(x)(u^\beta - u - v^\beta) = -k_\sigma^2 \chi q^\beta v^\beta.
\end{equation*}
In view of Lemma \fref{estimsol} and assumption (A2), we find
$$
\|u^\beta-u-v^\beta\|_{L^2(S)} \leq \sigma \kappa_m^{-1} \| q^\beta v^\beta\|_{L^2(S)} \leq C_0 \sigma \kappa_m^{-1} \|v^\beta\|_{L^2(S)}.
$$
The next step is to estimate $v^\beta$. We use for this the following lemma, whose proof is postponed to the end of the section.

\begin{lemma}
\label{lem:cGL2}
Assume (A1)--(A4) hold. Let $\cG$ be the solution operator associated to problem \eqref{eq:hpde-s1}. Then, for any $h \in L^2(S)$, we have,
\begin{equation*}
\E \left\| \cG k^2_\sigma  \chi q^\beta h \right\|_{L^2(S)}^2 \le C \sigma^2 k^d \beta^d \kappa_e^{-2}\| h\|_{L^2(S)}^2 ,
\end{equation*}
where the constant $C$ is independent of $\alpha$, $\sigma$, $k$, $\beta$ and $\kappa_e$.
\end{lemma}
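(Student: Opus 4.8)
The plan is to expand the second moment directly via the Green's function representation of $\cG$ and the covariance structure of $q^\beta$, then decouple the spatial integral over the two Green's functions from the correlation integral. Writing $w=\cG k_\sigma^2\chi q^\beta h$, we have, since $k_\sigma^2=\sigma k^2$ and $\chi$ restricts to $S$,
\be
w(x)=\sigma k^2\int_S G(x,y)\,q^\beta(y)\,h(y)\,dy .
\ee
Expanding $|w(x)|^2=w(x)w^*(x)$, integrating in $x$, taking expectation, and using the stationarity assumption (A1) to write $\E\{q^\beta(y)q^{\beta*}(y')\}=R((y-y')/\beta)$, I obtain
\be
\E\|w\|_{L^2(S)}^2=\sigma^2k^4\int_S\int_S K(y,y')\,R\!\left(\tfrac{y-y'}{\beta}\right)h(y)h^*(y')\,dy\,dy',\qquad K(y,y'):=\int_S G(x,y)G^*(x,y')\,dx .
\ee
The interchange of expectation and integration is justified by the boundedness (A2) of $q$ together with the $L^2$ bound on $G$ used below, which makes the triple integrand absolutely integrable.

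Next I would bound the two factors separately. For the kernel, the Cauchy--Schwarz inequality in $x$ gives $|K(y,y')|\le\|G(\cdot,y)\|_{L^2(S)}\|G(\cdot,y')\|_{L^2(S)}$, and the uniform estimate \fref{Glemma} of Proposition \ref{propG}, namely $\sup_{y\in S}\|G(\cdot,y)\|_{L^2(S)}\le Ck^{d/2-2}\kappa_e^{-1}$, yields $\sup_{y,y'\in S}|K(y,y')|\le C^2k^{d-4}\kappa_e^{-2}$. This is precisely where the factors $k^{d-4}$ and $\kappa_e^{-2}$ in the claimed bound originate, and it is the step that carries the essential quantitative information about the homogenized problem. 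For the correlation integral, I would use that $R$ is integrable over $\Rm^d$ (a consequence of (A4), as noted in the text via \fref{quad}). Bounding $|h(y)||h(y')|\le\tfrac12(|h(y)|^2+|h(y')|^2)$ and using the symmetry $|R(z)|=|R(-z)|$, the double integral is controlled by $\int_S|h(y)|^2\big(\int_{\Rm^d}|R((y-y')/\beta)|\,dy'\big)\,dy$; the change of variables $z=(y-y')/\beta$ produces the factor $\beta^d\|R\|_{L^1(\Rm^d)}$, so that
\be
\int_S\int_S\Big|R\!\left(\tfrac{y-y'}{\beta}\right)\Big|\,|h(y)|\,|h(y')|\,dy\,dy'\le\beta^d\|R\|_{L^1(\Rm^d)}\,\|h\|_{L^2(S)}^2 .
\ee

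Combining the three estimates gives $\E\|w\|_{L^2(S)}^2\le C\sigma^2k^4\cdot k^{d-4}\kappa_e^{-2}\cdot\beta^d\|h\|_{L^2(S)}^2=C\sigma^2k^d\beta^d\kappa_e^{-2}\|h\|_{L^2(S)}^2$, as claimed, with $C$ absorbing $\|R\|_{L^1(\Rm^d)}$ and hence independent of $\alpha,\sigma,k,\beta,\kappa_e$. I do not anticipate a serious obstacle: the argument is a second-moment computation reduced to two independent ingredients. The only points requiring care are the rigorous justification of Fubini (handled by (A2) and the $L^2$ Green's function bound) and ensuring that $C$ genuinely hides no dependence on $k$ or $\kappa_e$ — which is guaranteed because those parameters enter only through the explicit factors extracted from \fref{Glemma} and through $\|R\|_{L^1(\Rm^d)}$, a quantity depending only on the law of $q$.
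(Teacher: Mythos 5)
Your proposal is correct and follows essentially the same route as the paper: expand the second moment through the Green's function and the covariance $R((y-y')/\beta)$, apply Cauchy--Schwarz in $x$ together with the estimate \fref{Glemma} to extract $k^{d-4}\kappa_e^{-2}$, and then extract $\beta^d\|R\|_{L^1}$ from the remaining double integral. The only cosmetic difference is that you bound that double integral by the inequality $|h(y)||h(y')|\le\tfrac12(|h(y)|^2+|h(y')|^2)$ (a Schur-test argument), whereas the paper recasts it as $\int|\chi h|\,(|R^\beta|*|\chi h|)$ and invokes Young's convolution inequality; the two steps are interchangeable and yield the identical constant structure.
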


\bigskip 

With the shorthand $\| u\|^2_{L^2_\Omega(S)}= \E \|u\|^2_{L^2(S)}$, we then find
$$
\|u^\beta - u\|_{L^2_\Omega(S)} \le C(1+\sigma \kappa_m^{-1})\|v^\beta\|_{L^2_\Omega(S)} \leq C (1+\sigma \kappa_m^{-1}) \sigma (k\beta)^{d/2} \kappa_e^{-1} \|u\|_{L^2(S)},
$$
which yields the homogenization result provided $(1+\sigma \kappa_m^{-1}) \sigma (k\beta)^{d/2} \kappa_e^{-1} \ll 1$. Clearly, $\|u^\beta-u-v^\beta\|_{L^2(S)}$ satisfies the same bound; however, the same argument does not yield uniform in $\alpha$ control of $\|u^\beta - u -v^\beta\|_{L^2(K)}$, for a set $K$ in the upper half space, because the absorption in the upper space is precisely $\alpha$. We will use the above estimate together with the next lemma in order to control the higher order terms in \fref{eq:exp11}. Fourth order moments of $q^\beta$ are needed in the proof, which is given in section \ref{proofGL3}. Below, $\calL(E,F)$ denotes the space of bounded operators from the Banach space $E$ to the Banach space $F$.

\begin{lemma} \label{lem:cGL3} Assume (A1)--(A4) hold. Then, for any bounded set $K$ in the upper half-space,
\begin{equation}
\label{eq:Gopnorm1}
\E \|\cG k_\sigma^2 \chi q^\beta \cG \chi\|_{\mathcal{L}(L^2(S),L^2(K))}^2 \le  C^{(0)}_{k,\sigma,\kappa_e} \beta^d,
\end{equation}
and
\begin{equation}
\label{eq:Gopnorm2}
\E \|\cG k_\sigma^2 \chi q^\beta \cG k_\sigma^2 \chi q^\beta \|_{\mathcal{L}(L^2(S),L^2(K))}^2 \le \left\{
\begin{array}{ll}
C^{(1)}_{k,\sigma,\kappa_e} \beta^4(1+C^{(2)}_{k,\kappa_e}\beta^2)& \qquad d=3\\
C^{(1)}_{k,\sigma,\kappa_e} \beta^4\big((1+|\log(\beta k)|)^2+C^{(2)}_{k,\kappa_e}\big)& \qquad d=2,
\end{array}
\right.
\end{equation}
where $C^{(0)}_{k,\sigma,\kappa_e}=C \sigma^2 k^{2d-4} /\kappa_e^{4}$,  $C^{(1)}_{k,\sigma,\kappa_e}=C \sigma^{4}k^{d+4} /\kappa_e^{2}$, and $C^{(2)}_{k,\kappa_e}=k^{2d-4} /\kappa_e^{2}$.
\end{lemma}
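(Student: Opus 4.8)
The plan is to estimate both operator norms by passing to the kernels of the operators and averaging the appropriate moments of $q^\beta$, using the correlation function $R(x-y)=\E\{q(x)q^*(y)\}$ (integrable by (A4)) together with the Green's function bounds of Proposition \ref{propG}. Writing $M=k_\sigma^2\chi q^\beta$, the operator in \fref{eq:Gopnorm1} has kernel $\mathcal{K}_1(x,z)=k_\sigma^2\int_S G(x,w)q^\beta(w)G(w,z)\,dw$ and the operator in \fref{eq:Gopnorm2} has kernel $\mathcal{K}_2(x,z)=k_\sigma^4\, q^\beta(z)\int_S G(x,w)q^\beta(w)G(w,z)\,dw$, for $x\in K$ and $z\in S$.

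For \fref{eq:Gopnorm1} I would bound the operator norm by the Hilbert--Schmidt norm, $\E\|\cdot\|_{\mathcal{L}(L^2(S),L^2(K))}^2\le\E\|\mathcal{K}_1\|_{L^2(K\times S)}^2$, and expand the square. Only the second moment $\E\{q^\beta(w)\,q^\beta(w')^*\}=R((w-w')/\beta)$ appears; since $\int R((w-w')/\beta)\,dw'=\beta^d\|R\|_{L^1}$, a Schur estimate on the $(w,w')$ integration extracts a factor $\beta^d$ and leaves $\int_K\int_S\int_S|G(x,w)|^2|G(w,z)|^2\,dw\,dz\,dx$, which \fref{Glemma} bounds by $Ck^{2d-8}\kappa_e^{-4}$ (estimating $\int_S|G(w,z)|^2dz$ and $\int_K\int_S|G(x,w)|^2dw\,dx$ separately). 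With $k_\sigma^4=\sigma^2 k^4$ this is exactly $C^{(0)}_{k,\sigma,\kappa_e}\beta^d$.

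For \fref{eq:Gopnorm2}, $\E\|\mathcal{K}_2\|_{L^2(K\times S)}^2$ now involves the fourth moment $\E\{q^\beta(z)\,q^\beta(z')^*\,q^\beta(w)\,q^\beta(w')^*\}$, which I would split into its three second-order pairings plus the fourth cumulant; the cumulant is integrable and is controlled through the mixing hypothesis (A4) exactly as in \fref{quad}. The two ``off-diagonal'' pairings and the cumulant force both relative variables to concentrate at scale $\beta$, each contributing a factor $\beta^d$, hence a term of order $\beta^{2d}$ — these produce the $C^{(1)}_{k,\sigma,\kappa_e}C^{(2)}_{k,\kappa_e}\beta^{2d}$ contributions ($\beta^6$ for $d=3$, $\beta^4$ for $d=2$). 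The difficulty lies entirely in the remaining ``diagonal'' pairing $\E\{q^\beta(z)\,q^\beta(z')^*\}\,\E\{q^\beta(w)\,q^\beta(w')^*\}$: a plain Hilbert--Schmidt treatment forces $z=z'$, replacing $\E\{q^\beta(z)q^\beta(z')^*\}$ by $\E\{|q^\beta(z)|^2\}$ and so discarding the oscillation of $q^\beta(z)$; this yields only $\beta^d$, too large by a factor $\beta^{4-d}$ compared with the claimed bound.

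Recovering this factor is the main obstacle, and it is precisely the point where the operator norm must genuinely improve on the Hilbert--Schmidt norm. I would do this by working with the dual bilinear form $\langle \cG M\cG M\,h,\phi\rangle$, $h\in L^2(S)$, $\phi\in L^2(K)$, and exploiting the interplay between the rapid oscillation of $q^\beta$ and the smoothing of $\cG$: the weight against which $q^\beta$ is integrated carries Sobolev regularity $H^s$ with $s<(4-d)/2$, controlled through \fref{estHsG} (whose exponent reflects exactly the singularity of $G$ near the diagonal), whereas the $H^{-s}$ norm of a $\beta$-oscillation is smaller than its $L^2$ norm by $\beta^s$. Pairing these gains $\beta^{2s}$ over the Hilbert--Schmidt value, and letting $s\uparrow(4-d)/2$ converts $\beta^d$ into $\beta^4$; the borderline case $s=1$ in dimension two is what produces the logarithmic factor $(1+|\log(\beta k)|)^2$. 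Tracking the powers of $k$, $\kappa_e$ and $\sigma$ through \fref{Glemma}, \fref{plemma2} and \fref{estHsG} then gives $C^{(1)}_{k,\sigma,\kappa_e}$ and $C^{(2)}_{k,\kappa_e}$. The most delicate bookkeeping will be to keep the $H^s$ estimate uniform in $k$, $\kappa_e$ and $x\in K$, and to separate cleanly the oscillation-improved diagonal pairing from the already small off-diagonal terms.
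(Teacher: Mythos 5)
Your argument for \eqref{eq:Gopnorm1} is correct and is exactly the paper's: the operator norm is dominated pathwise by the Hilbert--Schmidt norm, only second moments appear, and Young's inequality together with \eqref{Glemma} gives $C^{(0)}_{k,\sigma,\kappa_e}\beta^d$. The genuine gaps are in your treatment of \eqref{eq:Gopnorm2}.

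First, your accounting of the fourth-moment terms is backwards, and the term you never address is the one that actually produces the answer. The pairings you dismiss as $O(\beta^{2d})$ are those coupling each intermediate variable $w$ to its own source variable $z$, i.e.\ producing $\rho((w-z)/\beta)\,\rho((w'-z')/\beta)$ against $G(w,z)G^*(w',z')$. These are \emph{not} $O(\beta^{2d})$: each factor $\rho^\beta$ is integrated against the diagonal singularity of the Green's function, and after the splitting $G=\Phi+p$ one finds $\int\rho(y/\beta)\,|\Phi(y)|\,dy\sim\beta^{2}$ for $d=3$, and $\sim\beta^{2}(1+|\log(\beta k)|)$ for $d=2$ (this is where Lemma \ref{G2d} enters), rather than $\beta^{d}$. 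Squaring this is precisely the leading term $C^{(1)}_{k,\sigma,\kappa_e}\beta^{4}$, while the regular part $p$, controlled by \eqref{plemma2}, generates the $C^{(2)}_{k,\kappa_e}$ corrections. This singularity-enhanced pairing is the heart of the paper's proof, and your plan contains no mechanism for it. Conversely, the pairing you single out as the difficulty, $R((z-z')/\beta)R((w-w')/\beta)$, is difficult only because you committed to the Hilbert--Schmidt norm, which forces $z=z'$. The paper escapes not by improving on Hilbert--Schmidt, but by estimating $\E\|\cG k_\sigma^2\chi q^\beta\cG k_\sigma^2\chi q^\beta f\|^2_{L^2(K)}$ for a \emph{fixed deterministic} $f$, keeping $f(z)f^*(z')$ in the expansion; then this pairing retains both decay factors and is $O(\beta^{2d})$, i.e.\ harmless, with no extra idea needed. (Note also that the fourth-moment control is not a consequence of (A4) ``as in \eqref{quad}'': it is the nontrivial mixing inequality \eqref{eq:4moments}, quoted from the literature.)

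Second, your oscillation/duality mechanism cannot repair the Hilbert--Schmidt loss, because the quantity you aim to bound --- the genuine operator norm inside the expectation --- does not satisfy the claimed estimate. In the operator norm the test function may be chosen adversarially, correlated with the medium: take $h=\chi q^{\beta*}g$ with $g$ smooth, so that $\|h\|_{L^2(S)}\le C_0\|g\|_{L^2(S)}$ while $q^\beta h=\chi|q^\beta|^2 g$ has the nonvanishing mean $\E\{|q|^2\}\chi g$. The inner factor of $q^\beta$ then produces no oscillation gain at all (this is exactly where your ``$H^{-s}$ norm of a $\beta$-oscillation'' heuristic fails: $q^\beta h$ is not a $\beta$-oscillation), and the leading part of the output is $k_\sigma^4\,\E\{|q|^2\}\,\cG(\chi q^\beta\,\cG(\chi g))$, which by Lemma \ref{lem:cGL2} and Theorem \ref{thm:clt} has $L^2(K)$-norm of typical size $\beta^{d/2}$. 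Hence $\E\|\cG k_\sigma^2\chi q^\beta\cG k_\sigma^2\chi q^\beta\|^2_{\mathcal{L}(L^2(S),L^2(K))}\gtrsim\beta^{d}$, incompatible with the right-hand side of \eqref{eq:Gopnorm2} for $d=2,3$. The estimate must therefore be read --- and is proved, and used, in the paper --- in the weak sense of fixed deterministic inputs: it is only ever applied to the deterministic $u$, the random argument $u^\beta-u$ being handled through \eqref{eq:Gopnorm1} alone. Separately, even for deterministic inputs the limit $s\uparrow(4-d)/2$ you invoke is not available, since the constant in \eqref{estHsG} blows up at the endpoint; this route cannot yield the clean $\beta^4$ of the $d=3$ statement.
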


\bigskip

We can now estimate the last term in \eqref{eq:exp11}. We have, with \fref{eq:Gopnorm1}, (A2), and the Cauchy-Schwarz inequality,
\begin{align*}
\E \|\cG k_\sigma^2 \chi q^\beta \cG &k_\sigma^2 \chi q^\beta (u^\beta - u)\|_{L^2(K)} \\
\le& k_\sigma^2 \|q^\beta\|_{L^\infty(\Omega \times \Rm^d)}\left( \E \|\cG k_\sigma^2 \chi q^\beta \cG\chi\|_{\mathcal{L}(L^2(S),L^2(K))}^2\, \E\|u^\beta - u\|_{L^2(S)}^2 \right)^{\frac 12}\\
\le& C^{(4)}_{k,\sigma,\kappa}\beta^d\|u\|_{L^2(S)},
\end{align*}
with $C^{(4)}_{k,\sigma,\kappa}=C (C^{0}_{k,\sigma,\kappa_e})^{1/2} (1+\sigma \kappa_m^{-1}) \sigma k^{d/2} \kappa_e^{-1}$. In terms of $\beta$, this term is of order $\beta^d$. 

Regarding the second term in the r.h.s of \fref{eq:exp11}, we find, with \fref{eq:Gopnorm2},
\begin{align*}
\E \|\cG k_\sigma^2 \chi q^\beta \cG k_\sigma^2 \chi q^\beta u\|_{L^2(K)} &\le \|u\|_{L^2(S)} \left(\E\|\cG k_\sigma^2 \chi q^\beta \cG k_\sigma^2 \chi q^\beta \|_{\mathcal{L}(L^2(S),L^2(K))}^2\right)^{\frac 12}\\
& \le   C^{(5)}_{\beta,k,\sigma,\kappa_e} \beta^2\|u\|_{L^2(S)},
\end{align*}
with 
$$
C^{(5)}_{\beta,k,\sigma,\kappa_e}=
\left\{
\begin{array}{ll}
\left(C^{(1)}_{k,\sigma,\kappa_e} (1+C^{(2)}_{k,\kappa_e}\beta^2)\right)^{1/2}& \qquad d=3\\
\left(C^{(1)}_{k,\sigma,\kappa_e} \big((1+|\log(\beta k)|)^2+C^{(2)}_{k,\kappa_e}\big)\right)^{1/2}& \qquad d=2,
\end{array} \right.
$$
This completes the proof of Theorem \ref{thm:homog} by inspection.

\subsection{Proof of Lemma \ref{lem:cGL2}} From direct calculations, we have
\begin{equation*}
\E \left\| \cG k_\sigma^2 \chi q^\beta h \right\|_{L^2(S)}^2 = k_\sigma^4 \int_{\Rm^{2d}\times S} G(x,y) G^*(x,y') \chi(y) \chi(y') R\left(\frac{y-y'}{\beta}\right) h(y)h^*(y') dy dy'dx.
\end{equation*}
Integrating over $x$ first and using Cauchy-Schwarz inequality, we find
\begin{equation*}
\E \left\| \cG k_\sigma^2 \chi q^\beta h \right\|_{L^2(S)}^2 \le k_\sigma^4 \sup_{y \in S}\|G(\cdot,y)\|^2_{L^2(S)} \int_{\Rm^{2d}} \left|R\left(\frac{y-y'}{\beta}\right)\right| \left| (\chi h)(y) (\chi h)(y')\right| dy dy'.
\end{equation*}
We can recast the remaining integral as
\begin{equation*}
\int_{\Rm^{d}} |\chi h|(y) \left( |R^\beta| *  | \chi h| \right) (y) dy dx,
\end{equation*}
where $R^\beta$ is a short-hand notation for $R(\cdot/\beta)$. Note that $R$ integrable, and $h$ is square integrable. We have then, using Young's inequality,
\begin{equation*}
\left\| |R^\beta| *  | \chi h| \right\|_{L^2} \le C \|R^\beta\|_{L^1} \| \chi h\|_{L^2} = C\beta^d \|R\|_{L^1} \| \chi h\|_{L^2},
\end{equation*}
and obtain finally
\begin{equation*}
\E \left\| \cG k_\sigma^2 \chi q^\beta h \right\|_{L^2(S)}^2 \le C \beta^d k_\sigma^4 \sup_{y \in S}\|G(\cdot,y)\|^2_{L^2(S)} \|R\|_{L^1} \|\chi h\|^2_{L^2}.
\end{equation*}
Using the first estimate of $G$ in \fref{Glemma}, we get the desired result and the proof is complete.

\subsection{Proof of Lemma \ref{lem:cGL3}} \label{proofGL3}  We start with the proof of \eqref{eq:Gopnorm1}. We have, after a Cauchy-Schwarz inequality, for all $x \in K$ and all $f \in L^2(S)$,
\begin{equation*}
\left\lvert \cG k_\sigma^2 \chi q^\beta \cG  \chi f(x) \right\rvert^2 \le \|f\|_{L^2(S)}^2 \int_{\Rm^d} \left| \int_{\Rm^d} G(x,y) k_\sigma^2 \chi(y) q^\beta(y) G(y,z) \chi (z) dy \right|^2 dz,
\end{equation*}
which implies
\begin{equation*}
 \|\cG k_\sigma^2 \chi q^\beta \cG \chi\|_{\mathcal{L}(L^2(S),L^2(K))}^2 \le \int_{\Rm^d \times K} \left| \int_{\Rm^d} G(x,y) k_\sigma^2 \chi(y) q^\beta(y) G(y,z) \chi(z)dy \right|^2 dz dx.
\end{equation*}
Taking expectations on both sides yields
\begin{equation*}
\begin{aligned}
 \E& \|\cG k_\sigma^2 \chi q^\beta \cG \chi\|_{\mathcal{L}(L^2(S),L^2(K))}^2\\
&\le  k_\sigma^4\int_{\Rm^{3d}\times K} G(x,y) G^*(x,y') G(y,z) G^*(y',z) R\left(\frac{y-y'}{\beta}\right) \chi(y) \chi(y') \chi^2(z)dy dy' dz dx\\
&\le  k_\sigma^4 \sup_{y \in S}\|G(y,\cdot)\|^2_{L^2(S)}\int_{\Rm^{2d}\times K} |R|\left(\frac{y-y'}{\beta}\right) \chi(y) \chi(y') |G(x,y)| |G(x,y')|dy dy' dx\\
&\le  k_\sigma^4 \sup_{y \in S}\|G(y,\cdot)\|^2_{L^2(S)} \sup_{x \in K}\|G(x,\cdot)\|^2_{L^2(S)}\beta^d \|R\|_{L^1}.
\end{aligned}
\end{equation*}
In the last line, we used Young's inequality. The second estimate of \fref{Glemma} and the symmetry of $G$ then completes the proof of \fref{eq:Gopnorm1}.

Regarding \eqref{eq:Gopnorm2}, we need fourth order moments estimate for mixing random fields. We have, for $q$ satisfying (A1)--(A4),
\begin{equation}
\label{eq:4moments}
\begin{aligned}
|\E[q^\beta_{\alpha_1}(y) q_{\alpha_2}^\beta(y')q_{\alpha_3}^\beta(z)q_{\alpha_4}^\beta(z')]| \le & \rho\left(\frac{y-z}{\beta}\right)\rho\left(\frac{y'-z'}{\beta}\right) + \rho\left(\frac{y-y'}{\beta}\right)\rho\left(\frac{z-z'}{\beta}\right) \\
& + \rho\left(\frac{y-z'}{\beta}\right)\rho\left(\frac{z-y'}{\beta}\right). 
\end{aligned}
\end{equation}
where $\rho = \varrho^{\frac 12}(\cdot/3)$ and $\alpha_j \in \{r,i\}$ for the real and imaginary parts of $q$; see \cite{Hairer} for a proof. Note that $\rho$ is bounded by one and integrable in $\Rm^d$ according to (A4). Decomposing $q$ into real and imaginary parts, there are 16 terms to estimate, all of which are treated in the same way with \fref{eq:4moments}. We therefore pick one as an example. Direct computation shows
\begin{equation*}
\begin{aligned}
\|\cG& k_\sigma^2 \chi q_r^\beta \cG k^2_\sigma \chi q_r^\beta  f\|_{L^2(K)}^2 \le k_\sigma^8\int_{K} d x  \int_{\Rm^{4d}} G(x,y) G^*(x,y')\\
&\times G(y,z)G^*(y',z')\chi(y)\chi(y')\chi(z)\chi(z')q_r^\beta(y)q_r^\beta(z)q_r^\beta(y')q_r^\beta(z') f(z) f^*(z')dz'dzdy'dy.
\end{aligned}
\end{equation*}
Taking expectation on both sides yield
\begin{equation*}
\begin{aligned}
\E \|\cG k_\sigma^2 \chi q_r^\beta \cG k^2 \chi q_r^\beta f\|_{L^2(K)}^2 \le  k_\sigma^8 & \int_{K\times \Rm^{4d}} G(x,y)G^*(x,y')G(y,z)G^*(y',z') \chi(y)\chi(y')\chi(z)\chi(z')\\
&\times \E[q_r^\beta(y)q_r^\beta(y')q_r^\beta(z)q_r^\beta(z')] f(z) f^*(z') dz'dzdy'dydx.
\end{aligned}
\end{equation*}
Following \fref{eq:4moments}, we need to control three integrals. The most singular one corresponds to the combination of arguments localizing the Green's functions on the diagonal, and is given by
\begin{equation*}
k_\sigma^8 \int_K  \left[\int_{\Rm^{2d}} \rho^\beta(y-z)|G|(y,z)|G|(x,y)\chi(y)\chi(z)|f(z)|dydz\right]^2 dx.
\end{equation*}
We then decompose $G$ as in Proposition \fref{propG} into $G(x,y) = \Phi(x-y) + p(x,y)$, where $\Phi$ is the free space Green's function solution to \fref{eq:FreeG} and $p$ a regular function. We split the integral above further into two terms. The first one is obtained by replacing the first $G$ by $\Phi$ above. Then, after a change of variables, this term becomes
\begin{align*}
  k_\sigma^8 \int_K  &\left[ \int_{\Rm^{d}} \rho^\beta(y)|\Phi|(y) \left(\int_{\Rm^d} |G|(x,y+z) \chi(y+z)\chi(z) |f(z)|dz \right) dy \right]^2 dx\\
&  \leq k_\sigma^8 |K| \| f\|^2_{L^2(S)} \sup_{x \in K}\| G(x,\cdot)\|^2_{L^2(S)} \left[ \int_{\Rm^{d}} \rho^\beta(y)|\Phi|(y) dy \right]^2. 
\end{align*}
When $d=3$, the last term is controlled by
\begin{equation*}
\int_{\Rm^3} \rho\left(\frac{y}{\beta}\right) \frac{e^{-\Im(k \underline{n}_e) |y|}}{4 \pi |y|} dy \le \beta^2 \int_{\Rm^3}  \frac{\rho(y)}{4 \pi |y|} dy \le C\beta^2.
\end{equation*}
Here, we used the fact that $\rho$ is integrable and bounded in $\Rm^d$. When $d=2$, we find, according to Lemma \ref{G2d} and (A4),
\begin{equation*}
C \int_{\Rm^2} \rho\left(\frac{y}{\beta}\right) (1+ |\log (\Re(k \underline{n}_e) |y|)|) dy \le C \beta^2 \int_{\Rm^2}  \rho(y) (1+ |\log (k \beta |y|)|)dy \le C\beta^2 (1+ |\log(k \beta)|).
\end{equation*}
The contribution of $p(y,z)$ is controlled as follows. We have, for all $x \in K$:
\begin{align*}
  \int_{\Rm^{2d}} \rho^\beta(y-z)& |p|(y,z) |G|(x,y) \chi(y)\chi(z) |f(z)|dydz \\
&\le \|p\|_{L^\infty_{y,z}(S \times S)} \int_{\Rm^d} (\rho^\beta * |\chi f|)(y) |G|(x,y)\chi(y)dy \\
  &\le \|p\|_{L^\infty_{y,z}(S \times S)}  \|\rho^\beta\|_{L^1}\|f\|_{L^2(S)} \sup_{x \in K}\| G(x,\cdot)\|_{L^2(S)}.
\end{align*}
We used Young's inequality in the last step. With \fref{plemma2} and \fref{Glemma}, the term above is controlled by $C \beta^d k^{3d/2-4} \kappa_e^{-2}$. Gathering all previous results, we find that the contribution associated with the first term in \eqref{eq:4moments} is of order $\sigma^4 k^{d+4} \kappa_e^{-2} \beta^4 (1+k^{2d-4} \kappa_e^{-2} \beta^2)$ for $d =3$ and $\sigma^4 k^{d+4} \kappa_e^{-2} \beta^4 ((1+|\log(\beta k)|)^2+k^{2d-4} \kappa_e^{-2})$ for $d=2$.

\medskip

The other terms in \eqref{eq:4moments} lead to weaker contributions in $\beta$. We take the following term as an example: after integrating in $x$,
\begin{align*}
  k_\sigma^8  \sup_{y \in S}\| G(\cdot,y)\|^2_{L^2(K)}\int_{\Rm^{4d}} &|G|(y,z)|G|(y',z') \chi(y)\chi(y')\chi(z)\chi(z') \\
                                                                      &\times  |f(z)||f(z')|\rho\left(\frac{y-y'}{\beta}\right)\rho\left(\frac{z-z'}{\beta}\right)d z'd zd y'dy 
\end{align*}
We recast the integral as
\begin{align*}
\int_{\Rm^{2d}} &\left(\rho^\beta*[|G|(\cdot,z)\chi ]\right)(y') \left(\rho^\beta*[|G|(y',\cdot)\chi |f|]\right)(z)  |f(z)| \chi(z)\chi(y') dy' dz\\
                &\leq \|f\|_{L^2(S)} \left( \int_{\Rm^d} \|\rho^\beta*[|G|(\cdot,z)\chi ]\|^2_{L^2} \; \|\rho^\beta*[|G|(y',\cdot)\chi(y')\chi |f| ](z)\|^2_{L^2_{y'}} \chi(z)dz   \right)^{1/2}\\
                &\leq \|f\|_{L^2(S)} \|\rho^\beta \|^2_{L^1} \sup_{z \in S}\| G(\cdot,z)\|_{L^2(S)}
                  \left( \int_{\Rm^d} \left| G(y',z')\chi(y') \chi(z')f(z') \right|^2 dy'dz'   \right)^{1/2}\\
  &\leq \|f\|^2_{L^2(S)} \|\rho^\beta \|^2_{L^1} \sup_{z \in S}\| G(\cdot,z)\|^2_{L^2(S)}.
\end{align*}
Multiplying by $k_\sigma^8  \sup_{y \in S}\| G(\cdot,y)\|^2_{L^2(K)}$, and accounting for \fref{Glemma}, this term generates of contribution of order $k^{2d} \sigma^4 \beta^{2d} \kappa_e^{-4}$, which is weaker than the other leading terms since $k \gg 1$.

By exactly the same method, we see that the third term on the right hand side of \eqref{eq:4moments} leads to a contribution of order $\beta^{2d}$. Combining all the results above, we proved \eqref{eq:Gopnorm2}.

\medskip

\medskip

\subsection{Asymptotics of the corrector} \label{secasymp}

We address here the convergence of the corrector $v^\beta=-\cG k_\sigma^2 \chi q^\beta u $ as $\beta \to 0$, keeping the other parameters fixed. We introduce first some mathematical preliminaries.

\paragraph{Preliminaries.} Let $\mathcal{H}$ be a Hilbert space, and $X : (\Omega,\mathscr{F},\bP) \to \mathcal{H}$ be a random variable with value in $\mathcal{H}$. Then $X$ induces a (Borel) probability measure on $\mathcal{H}$ as follows: for each Borel set $B \in \mathcal{H}$, define
\begin{equation*}
\lambda_X (B) := \bP(X \in B).
\end{equation*}
A family $\{X^\beta : \Omega \to \mathcal{H}\}_{\beta \in (0,1)}$ of random variables in $\mathcal{H}$ is said to converge in distribution to $X : \Omega \to \mathcal{H}$, as $\beta \to 0$, if the family of probability measures induced by $\{X^\beta\}$ on $\mathcal{H}$, denoted by $\lambda_{X^\beta}$, converges to that induced by $X$, denoted by $\lambda_X$. In other words, for any Borel measurable subset $B \subseteq \mathcal{H}$, the real numbers $\lambda_{X^\beta}(B)$ converges to $\lambda_X(B)$. An important and very useful criterion for the convergence in distribution of $X^\beta$ to $X$ is given as follows. We refer to \cite{partha} for its proof.

\begin{proposition}\label{prop:criterion} Suppose $\mathcal{H}$ is a separable Hilbert space. Let $\langle \cdot,\cdot \rangle$ denote the inner product on $\mathcal{H}$. Then $X^\beta$ converges in distribution to $X$, provided that
\begin{enumerate}
\item[(1)] for each $h \in \mathcal{H}$, the random variables $\langle X^\beta, h \rangle$ converges in distribution to $\langle X,h \rangle$. 
\item[(2)] the family of probability measures $\{\lambda_{X^\beta}\}$ is tight, i.e. $\forall \delta > 0$, there exists a compact set $\mathcal{K} \subseteq \mathcal{H}$, such that
\begin{equation*}
\inf_{\beta \in (0,1)} \lambda_{X^\beta}(\mathcal{K}) \ge 1 - \delta.
\end{equation*}
\end{enumerate}
\end{proposition}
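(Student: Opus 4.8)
The plan is to derive the convergence in distribution from (1) and (2) by a standard relative-compactness-plus-uniqueness argument, the essential subtlety being that in infinite dimensions the one-dimensional convergence in (1) alone does not suffice, so tightness must be used to control the limits.

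First I would invoke Prokhorov's theorem. Since $\mathcal{H}$ is a separable Hilbert space, hence a Polish space, the tightness hypothesis (2) makes the family $\{\lambda_{X^\beta}\}$ relatively compact in the topology of weak convergence of Borel probability measures on $\mathcal{H}$. Consequently, along any sequence $\beta_n \to 0$ there is a subsequence $\beta_{n_k}$ for which $\lambda_{X^{\beta_{n_k}}}$ converges weakly to some probability measure $\mu$ on $\mathcal{H}$.

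Next I would identify every such subsequential limit $\mu$ with $\lambda_X$. For fixed $h \in \mathcal{H}$ the map $x \mapsto \langle x, h\rangle$ is continuous from $\mathcal{H}$ to $\Rm$, so weak convergence of $\lambda_{X^{\beta_{n_k}}}$ to $\mu$ forces the laws of $\langle X^{\beta_{n_k}}, h\rangle$ to converge weakly to the pushforward of $\mu$ under $\langle \cdot, h\rangle$. Hypothesis (1), on the other hand, states that $\langle X^\beta, h\rangle$ converges in distribution to $\langle X, h\rangle$ for the full family. Comparing the two limits, the law of $\langle X, h\rangle$ must equal the pushforward of $\mu$ under $\langle \cdot, h\rangle$, for every $h$; taking characteristic functions at the value $1$ and letting $h$ range over all of $\mathcal{H}$ (so that any scaling $th$ is absorbed), this is exactly the statement that $\mu$ and $\lambda_X$ share the same characteristic functional,
\begin{equation*}
\int_{\mathcal{H}} e^{i\langle x, h\rangle}\, d\mu(x) = \E\big[e^{i\langle X, h\rangle}\big], \qquad \forall\, h \in \mathcal{H}.
\end{equation*}

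Finally I would appeal to the injectivity of the characteristic functional on Borel probability measures over a separable Hilbert space. The functional determines the joint laws of the projections $(\langle \cdot, h_1\rangle, \dots, \langle \cdot, h_n\rangle)$ through its values at linear combinations $\sum_j t_j h_j$, and these finite-dimensional laws fix the measure on cylinder sets, which generate the Borel $\sigma$-algebra of a separable $\mathcal{H}$; hence $\mu = \lambda_X$. Since every subsequential weak limit of the relatively compact family $\{\lambda_{X^\beta}\}$ thus equals $\lambda_X$, the whole family converges weakly to $\lambda_X$, which is the asserted convergence in distribution. The one genuinely delicate point is that convergence of characteristic functionals does not, in infinite dimensions, imply weak convergence on its own (there is no unrestricted Lévy continuity theorem); this is precisely the gap that tightness fills, and keeping the two roles separate---(1) to pin down the limit, (2) to guarantee limits exist---is what makes the argument go through.
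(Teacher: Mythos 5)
Your proof is correct: Prokhorov's theorem turns the tightness hypothesis into relative compactness, hypothesis (1) plus the continuous mapping theorem identifies every subsequential limit through its characteristic functional, and the fact that the characteristic functional determines a Borel measure on a separable Hilbert space (cylinder sets generate the Borel $\sigma$-algebra) pins the limit down as $\lambda_X$. The paper itself offers no proof --- it defers to the cited reference \cite{partha} --- and your argument is precisely the standard one found there, including the correct observation that tightness is indispensable because no unrestricted L\'evy continuity theorem holds in infinite dimensions.
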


For $K \subset \Rm^d$ a bounded set, we use the notation $H^s(K)$, $s \in (0,1)$, to denote the fractional Sobolev space defined by
\begin{equation*}
H^s(K) : = \left\{ f\in L^2(K) \,\Big|\, \|f\|_{L^2(K)} < \infty \text{ and } [f]_{H^s(K)} < \infty\right\},
\end{equation*}
where $[\cdot]_{H^s(K)}$ denotes the semi-norm given by
\begin{equation*}
\left( [f]_{H^s} \right)^2 := \int_{K \times K} \frac{|f(x) - f(y)|^2}{|x-y|^{d+2s}} dxdy.
\end{equation*}
The norm on $H^s(K)$ is given by the formula $\|f\|_{H^s}^2 := \|f\|^2_{L^2} + [f]_{H^s}^2$. 
In view of the fact that the embedding $H^s(K) \hookrightarrow L^2(K)$ is compact, a very simple yet useful criterion for tightness of measures on $L^2$ is given as follows; see \cite[Theorem A.2]{Jing16}.

\begin{proposition}\label{prop:tightness} Let $\{X^\beta\}_{\beta \in (0,1)}$ be a family of random variables on the separable Hilbert space $L^2(K)$. Suppose that there exists $C > 0$ and $s \in (0,1)$, independent of $\beta$, such that
\begin{equation*}
\E \|X^\beta\|_{H^s(K)} \le C.
\end{equation*}
Then the family $\{\lambda_{X^\beta}\}_{\beta \in (0,1)}$ of probability measures induced by $\{X^\beta\}$ is tight.
\end{proposition}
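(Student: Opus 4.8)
The plan is to turn the uniform moment bound into tightness by exploiting the compactness of the embedding $H^s(K) \hookrightarrow L^2(K)$, which is recorded just above the statement. The mechanism is the classical one: closed balls of $H^s(K)$ are relatively compact in $L^2(K)$, so a uniform control of $\E\|X^\beta\|_{H^s(K)}$ forces the mass of $\lambda_{X^\beta}$ to concentrate, up to an arbitrarily small remainder, inside one such ball.

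Concretely, I would fix $\delta > 0$ and, for a radius $R > 0$ to be chosen, introduce the $H^s$-ball
\begin{equation*}
B_R := \big\{ f \in H^s(K) : \|f\|_{H^s(K)} \le R \big\},
\end{equation*}
regarded as a subset of $L^2(K)$. Since $\|\cdot\|_{H^s(K)}$ is lower semicontinuous for $L^2(K)$-convergence, $B_R$ is Borel in $L^2(K)$, so $\{X^\beta \in B_R\} = \{\|X^\beta\|_{H^s(K)} \le R\}$ is a genuine event (note that $\E\|X^\beta\|_{H^s(K)} \le C$ already guarantees $X^\beta \in H^s(K)$ almost surely). By the compact embedding, the $L^2(K)$-closure $\mathcal{K}_R := \overline{B_R}^{\,L^2(K)}$ is a compact subset of $L^2(K)$.

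Next I would apply Markov's inequality to the nonnegative random variable $\|X^\beta\|_{H^s(K)}$, which gives, uniformly in $\beta$,
\begin{equation*}
\bP\big( \|X^\beta\|_{H^s(K)} > R \big) \le \frac{\E\|X^\beta\|_{H^s(K)}}{R} \le \frac{C}{R}.
\end{equation*}
Choosing $R = C/\delta$ and using $B_R \subseteq \mathcal{K}_R$, we obtain
\begin{equation*}
\inf_{\beta \in (0,1)} \lambda_{X^\beta}(\mathcal{K}_R) \ge \inf_{\beta \in (0,1)} \bP\big( \|X^\beta\|_{H^s(K)} \le R \big) \ge 1 - \frac{C}{R} = 1 - \delta,
\end{equation*}
which is precisely the tightness criterion of Proposition \ref{prop:criterion}(2). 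Since $\delta$ was arbitrary, the family $\{\lambda_{X^\beta}\}$ is tight.

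Because every estimate above is uniform in $\beta$ and depends only on the single constant $C$, there is no genuine analytic difficulty; the argument is essentially a one-line Chebyshev bound dressed up by the compact embedding. The only points deserving attention are bookkeeping ones: the measurability of $B_R$ in $L^2(K)$, supplied by lower semicontinuity of the $H^s$-norm, and the validity of the compact embedding for the bounded set $K$, which is the standing hypothesis here. I expect these to be the mild technical points rather than a real obstacle.
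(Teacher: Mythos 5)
Your proof is correct and coincides with the approach the paper intends: the paper does not prove Proposition \ref{prop:tightness} itself but defers to \cite[Theorem A.2]{Jing16}, and the argument behind that citation is precisely your combination of Markov's inequality with the compact embedding $H^s(K) \hookrightarrow L^2(K)$ (closed $H^s$-balls being compact in $L^2(K)$). Your bookkeeping points (Borel measurability of the $H^s$-ball via lower semicontinuity of the $H^s$-norm, and a.s.\ membership of $X^\beta$ in $H^s(K)$) are also handled correctly, so nothing further is needed.
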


Next, we prove Theorem \ref{thm:clt}. We go back to the expansion formula \eqref{eq:exp1} and realize that, according to Theorem \ref{thm:homog},
\begin{equation*}
\lim_{\beta \to 0} \E \left\| \beta^{-\frac d2} \left(\cG k_\sigma^2 \chi q^\beta \cG k_\sigma^2 \chi q^\beta u + \cG k_\sigma^2 \chi q^\beta \cG k_\sigma^2 \chi q^\beta (u^\beta - u)\right) \right\|_{L^2(K)} = 0.
\end{equation*}
It follows that the limiting distribution in $L^2(K)$ of the normalized homogenization error $\beta^{-\frac d2}(u^\beta-u)$ is the same as that of $\beta^{-\frac d2}v^\beta$. In view of the criteria in Proposition \ref{prop:criterion} and Proposition \ref{prop:tightness}, it suffices to prove the results in Lemma \ref{lem:clt} below. Recall the notations in Theorem \ref{thm:clt}: $M^{1/2}=\{\alpha_{ij}\}$ is the square root of the matrix $M$ defined in \fref{defM}, and $W_y$ is defined by
\begin{equation*}
W_y:=(\alpha_{11} W_y^{(1)} + \alpha_{12} W_y^{(2)})+i(\alpha_{21}W_y^{(1)}  + \alpha_{22} W_y^{(2)}) = \begin{pmatrix} 1 & i\end{pmatrix} M^{\frac12}\mathbf{W}_y,
\end{equation*} 
where $W^{(1)}_y$ and $W^{(2)}_y$  are standard independent multiparameter ($y$-parameter) Wiener processes, and $\mathbf{W}_y:= (W^{(1)}_y, W^{(2)}_y)^T$. For any $\varphi \in L^2(K)$, let finally $m(y)=\int_{K} G(x,y) \varphi(x) dy$, where $G$ is the Green's function of the homogenized equation \ref{eq:hpde-s1}. We have then:
\begin{lemma}\label{lem:clt}
Under the same assumptions of Theorem \ref{thm:clt}, we have:
\begin{enumerate}
\item[(1)] For each $\varphi \in L^2(K)$,
\begin{equation}
\label{eq:kom1}
\left(\frac{v^\beta}{\beta^{d/2}}, \varphi \right) \xrightarrow[\beta\to 0]{\mathrm{distribution}} -\sigma k^2  \int_{\Rm^d} u(y)m(y)\chi(y) dW_y.
\end{equation}
\item[(2)] Let $d = 2,3$ and $s < (4-d)/2$. Then there exists constant $C > 0$ such that
\begin{equation}
\label{eq:kom2}
\E \|\beta^{-d/2} v^\beta\|_{H^s(K)} \le C.
\end{equation}
\end{enumerate}
\end{lemma}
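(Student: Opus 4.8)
The plan is to treat the two parts separately. For part (1), since $v^\beta = -\cG k_\sigma^2 \chi q^\beta u$ with $k_\sigma^2=\sigma k^2$, Fubini recasts the pairing as a scalar fast-oscillating integral of the stationary mean-zero field $q$:
\begin{equation*}
\left(\frac{v^\beta}{\beta^{d/2}},\varphi\right) = -\sigma k^2\, \beta^{-d/2}\int_{\Rm^d}\Psi(y)\, q(y/\beta)\, dy, \qquad \Psi := u\, m\, \chi,
\end{equation*}
where $\Psi$ is bounded with compact support in $S$ (indeed $u$ is bounded on $S$, and $m(y)=\int_K G(x,y)\varphi(x)\,dx$ is smooth for $y\in S$ because $K$ and $S$ are disjoint). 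The target is the complex Gaussian $Z=-\sigma k^2\int\Psi\,dW_y$. First I would pin down the limiting second-order structure, which determines $Z$ as a mean-zero complex Gaussian. Writing $R(x)=\E\{q(x)q^*(0)\}$ and the pseudo-correlation $\tilde R(x)=\E\{q(x)q(0)\}$, the change of variables $y'=y-\beta w$ gives
\begin{align*}
\E\left|\beta^{-d/2}\!\int \Psi q^\beta\right|^2 &= \int_{\Rm^{2d}}\Psi(y)\overline{\Psi(y-\beta w)}R(w)\,dw\,dy \xrightarrow[\beta\to0]{} \Big(\int R\Big)\int|\Psi|^2,\\
\E\left(\beta^{-d/2}\!\int \Psi q^\beta\right)^2 &\xrightarrow[\beta\to0]{}\Big(\int \tilde R\Big)\int\Psi^2,
\end{align*}
by dominated convergence, using $R,\tilde R\in L^1$ (from (A4) and \fref{quad}) and continuity of translation in $L^2$.

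A short computation with the definitions in \fref{defM} shows $\int R=\sigma_r^2+\sigma_i^2=\mathrm{tr}(M)$ and $\int\tilde R=\sigma_r^2-\sigma_i^2+2i\gamma$. These match exactly $\E|Z|^2$ and $\E Z^2$ computed from \fref{defWy}--\fref{varWien} (using that the principal square root $M^{1/2}$ is symmetric, so the Frobenius norm of $M^{1/2}$ equals $\mathrm{tr}(M)$ and $(M^{1/2})^2=M$). Hence the candidate limit $Z$ has the correct covariance and pseudo-covariance, and it remains only to prove Gaussianity.

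The crux is this upgrade to convergence in distribution, and it is the main obstacle of the lemma. I would proceed by the method of moments applied to the real and imaginary parts: decomposing $q=q_r+iq_i$ and using stationarity, the $n$-th moment of $\beta^{-d/2}\int\Psi q^\beta$ is an $n$-fold integral of joint moments of $q$. Using a mixing bound of the type \fref{eq:4moments} (with $\rho=\varrho^{1/2}(\cdot/3)$), these joint moments split, up to error terms controlled by (A2) and the integrability (A4), into products of pair-correlations according to the pairings of the $n$ arguments. The scaling $\beta^{-d/2}$ per factor then forces every cumulant of order $\ge 3$ to vanish as $\beta\to0$, while the second cumulant converges to the value computed above; Wick's theorem identifies all limiting moments with those of $Z$, and the moment problem is determinate since $q$ is bounded. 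This combinatorial bookkeeping—showing only the fully-paired contributions survive and the higher-cumulant terms are $o(1)$, the decoupling being supplied by the separation-of-supports structure of the mixing function exactly as in Lemma \ref{lem:cGL3}—is where the real work lies; it furnishes \fref{eq:kom1}.

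Part (2) is comparatively routine and rests on the Green's function regularity in Proposition \ref{propG}. By Jensen it suffices to bound $\beta^{-d}\E\|v^\beta\|_{H^s(K)}^2=\beta^{-d}\big(\E\|v^\beta\|_{L^2(K)}^2+\E[v^\beta]_{H^s(K)}^2\big)$. The $L^2(K)$ term is treated exactly as in Lemma \ref{lem:cGL2}, with $\sup_{y\in S}\|G(\cdot,y)\|_{L^2(S)}$ replaced by $\sup_{y\in S}\|G(\cdot,y)\|_{L^2(K)}$, finite by \fref{Glemma}. For the seminorm, writing $v^\beta(x)-v^\beta(x')=-\sigma k^2\int(G(x,y)-G(x',y))\chi q^\beta u\,dy$ and repeating the Young's inequality argument of Lemma \ref{lem:cGL2} yields
\begin{equation*}
\E|v^\beta(x)-v^\beta(x')|^2\le C\sigma^2 k^4\beta^d\|R\|_{L^1}\|u\|_{L^\infty(S)}^2\,\|G(x,\cdot)-G(x',\cdot)\|_{L^2(S)}^2.
\end{equation*}
Dividing by $\beta^d|x-x'|^{d+2s}$, integrating over $K\times K$, and using Fubini turns the right-hand side into $C\int_S[G(\cdot,y)]_{H^s(K)}^2\,dy$, which is finite precisely when $s<(4-d)/2$ by the uniform estimate \fref{estHsG}. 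This establishes \fref{eq:kom2} and completes the proof.
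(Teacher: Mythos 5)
Your part (1) is sound and follows, in substance, the paper's route: the pairing is rewritten as the oscillatory integral $-\sigma k^2\beta^{-d/2}\int_{\Rm^d}\Psi\, q(\cdot/\beta)\,dy$ with $\Psi=um\chi$ supported in $S$, and the limit is identified through its second-order structure. Your computations $\int R=\sigma_r^2+\sigma_i^2=\mathrm{tr}(M)$ and $\int\tilde R=\sigma_r^2-\sigma_i^2+2i\gamma$, and the verification that these match $\E\{dW_x\,dW_y^*\}$ and $\E\{dW_x\,dW_y\}$ computed from \fref{defWy}--\fref{varWien} using the symmetry of $M^{1/2}$, are correct and in fact more explicit than the paper. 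The difference is that the paper recasts $(\Re\zeta^\beta,\Im\zeta^\beta)$ as a real two-vector $h^\beta=-\beta^{-d/2}\int A(y)Q^\beta(y)\,dy$ with $A$ having real $L^2(S)$ entries, and then \emph{quotes} the vector CLT of \cite{GB-08,Bal-Jing}, namely $h^\beta\to\calN\big(0,\int_S A M A^T dy\big)$ in distribution, rather than reproving it; your method-of-moments/cumulant sketch is precisely the content of those references, so on this point your proposal is at the same level of completeness as the paper (note only that what you need is $\Psi\in L^2(S)$, which is what the argument actually uses; boundedness of $m$ is not essential).

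Part (2) contains a genuine gap. By pulling $u$ out in $L^\infty(S)$, your bound leaves, after dividing by $|x-x'|^{d+2s}$ and integrating over $K\times K$, the quantity $\int_S[G(\cdot,y)]^2_{H^s(K)}\,dy$. But $S=\{(x',z):z\in(-L,0)\}$ is an \emph{unbounded} slab, and \fref{estHsG} is only a $\sup$-in-$y$ bound: integrating a uniform constant over $S$ gives $+\infty$, so the claimed finiteness does not follow from the cited estimate. Repairing your route would require integrability over $S$ (hence decay) of $y\mapsto[G(\cdot,y)]_{H^s(K)}$, which is not among the paper's estimates and is delicate to obtain uniformly in the artificial absorption $\alpha$ (for $y$ transversally far from $K$ the connecting paths run mostly through the upper half-space, where the only damping is $\alpha$). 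The paper performs the estimates in the opposite order precisely to avoid this: starting from \fref{eq:Hs1}, it applies Cauchy--Schwarz to the $K\times K$ integral at fixed $(\xi,\eta)$, pulling out only $\sup_{\xi\in S}\|G(\cdot,\xi)\|^2_{H^s(K)}$ (exactly what \fref{estHsG} provides), and keeps \emph{both} factors $\chi u$ inside the remaining $(\xi,\eta)$ integral, which Young's inequality with $\|R(\cdot/\beta)\|_{L^1}=\beta^d\|R\|_{L^1}$ bounds by $C\beta^d\|u\|^2_{L^2(S)}$; the integration over the unbounded slab is thus paid for by $\|u\|_{L^2(S)}^2$, not by $G$. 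Your argument is fixed by the same reordering (do not take $u$ out of the integral). A minor related point: the $L^2(K)$ control of $v^\beta$ needs $\sup_{y\in S}\|G(\cdot,y)\|_{L^2(K)}$, which is not literally \fref{Glemma} (that estimate takes the sup over $x_0\in K$ and integrates over $S$); it follows instead from \fref{estHsG}.
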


\begin{proof} For item one, we compute
\begin{equation*}
\begin{aligned}
\beta^{-d/2}(v^\beta, \varphi) &= \frac{-\sigma k^2 }{\sqrt{\beta^d}} \int_{K \times \Rm^d} G(x,y)\chi(y) q\left(\frac{y}{\beta}\right) u(y) \varphi(x) dy dx\\
&= \frac{-\sigma k^2}{\sqrt{\beta^d}} \int_{\Rm^d}  q\left(\frac{y}{\beta}\right) u(y) m(y) \chi(y) dy.
\end{aligned}
\end{equation*}
Write then for simplicity $m_0(y)=u(y) m(y) \chi(y)$. We address the convergence of $\zeta^\beta:=\beta^{-d/2}(v^\beta, \varphi)$ by considering the real vector $(\Re(\zeta^\beta),\Im(\zeta^\beta))^T$. For this, we consider random vector $h^\beta$ of the form, with $Q^\beta(y)=(q_r(\frac{y}{\beta}),q_i(\frac{y}{\beta}))^T$, 
$$
h^\beta= \frac{-1}{\sqrt{\beta^d}} \int_{\Rm^d} A(y) Q^\beta(y)dy,
$$
where $A=\{a_{ij}\}$ is matrix whose entries are real $L^2(S)$ functions supported in $S$. These are integrals involving the product of an $L^2(S)$ function and the random oscillatory function $q_\alpha^\beta$, $\alpha=\{r,i\}$, which is a rescaled version of the short range correlated random field $q_\alpha(x,\omega)$. Adapting the results of \cite{GB-08,Bal-Jing} to our case, we find that
\begin{equation*}
h^{\beta} \xrightarrow[\beta \to 0]{\mathrm{distribution}} h \sim \calN \left(0, \int_S A(y) M A^T(y) dy \right),
\end{equation*}
where $M$ is defined in \fref{defM}. With the multiparameter Wiener process $\mathbf{W}_y = (W^{(1)}_y,W^{(2)}_y)^T$ defined earlier, we find that the distribution on the right hand side is realized by
$$
\int_S A(y) M^{1/2} d\bold{W}_y \sim \calN \left(0, \int_S A(y) M A^T(y) dy \right).
$$
The left hand side can be written as a sum of several Wiener integrals in terms of the independent components $W^{(1)}_y$ and $W^{(2)}_y$; those integrals are simply random normal variables with zero mean and they have appropriate joint variances.

To verify \eqref{eq:kom1}, it suffices to write $\zeta^\beta$ as
$$
\zeta^\beta=\frac{-\sigma k^2}{\sqrt{\beta^d}} \int_{\Rm^d} A(y) Q^\beta(y)dy, \qquad A=\left(
\begin{array}{ll}
\ds \Re(m_0)& - \Im (m_0) \\
\ds \Im(m_0)& \Re(m_0)
\end{array}
\right),
$$
so that
\begin{equation*}
\zeta^{\beta} \xrightarrow[\beta \to 0]{\mathrm{distribution}}
-\sigma k^2 \int_S A(y) M^{1/2} d\bold{W}_y.
\end{equation*}
 We then obtain \fref{eq:kom1} by inspection.

Next, we show that the measures generated by $v^\beta/\beta^{d/2}$ are tight in $L^2(K)$, by proving \eqref{eq:kom2}. Here, $K$ is any bounded set in the upper half space. By definition, $\|v^\beta\|_{H^s(K)}^2 = \|v^\beta\|_{L^2(K)}^2 + [v^\beta]_{H^s(K)}^2$ where the second term is the $H^s$ seminorm given by
\begin{equation*}
\begin{aligned}
{[v^\beta]}^2_{H^s(K)} &= \left[-\cG k_\sigma^2 \chi q^\beta u\right]_{H^s}^2 = \int_{K \times K} \frac{\left| \cG k_\sigma^2 \chi q^\beta u (x) - \cG k_\sigma^2 \chi q^\beta u(y) \right|^2}{|x-y|^{d+2s}} \, dx dy\\
 &= k_\sigma^4 \int_{K \times K} \frac{1}{|x-y|^{d+2s}} \left|\int_{\Rm^d} \left[G(x,z) - G(y,z)\right] \chi(z) q\left(\frac{z}{\beta}\right) u(z) dz \right|^2 \, dx dy.
 \end{aligned}
\end{equation*}
Taking expectations, we have
\begin{equation}
\label{eq:Hs1}
\begin{aligned}
\E [v^\beta]_{H^s}^2 = &\int_{K\times K \times \Rm^{2d}}  \frac{\left[G(x,\xi) - G(y,\xi)\right] \left[G^*(x,\eta) - G^*(y,\eta)\right]}{|x-y|^{d+2s}} R\left(\frac{\xi - \eta}{\beta}\right)\\
&\qquad\qquad \times \chi(\xi)\chi(\eta) u(\xi)  u^*(\eta)\, d\xi d\eta d x dy.
\end{aligned}
\end{equation}
Then, with the Cauchy-Schwarz inequality, 
$$
\begin{aligned}
& \sup_{\xi, \eta \in S} \int_{K \times K}\frac{\left|G(x,\xi) - G(y,\xi)\right|\, \left| G(x,\eta) - G(y,\eta)\right|}{|x-y|^{d+2s}} dx dy\\
& \qquad \le  \sup_{\xi \in S} \int_{K \times K}\frac{\left|G(x,\xi) - G(y,\xi)\right|^2}{|x-y|^{d+2s}} dx dy \leq \sup_{\xi \in S}\|G(\cdot,\xi)\|^2_{H^s(K)}.
\end{aligned}
$$
With \fref{estHsG} and plugging the latter estimate in \eqref{eq:Hs1}, we have
\begin{equation*}
\E [v^\beta]_{H^s}^2 \le \|G(x,\xi)\|_{L^\infty_\xi H^s_x}^2 \int_{\Rm^{2d}} |R|\left(\frac{\xi - \eta}{\beta}\right)|\chi(\xi) u(\xi) \chi(\eta) u(\eta)|\, d \xi d\eta.
\end{equation*}
By similar arguments as in the proof of Lemma \ref{lem:cGL2}, we get to conclude that
\begin{equation*}
\E [v^\beta]_{H^s}^2 \le C\beta^d \|u\|^2_{L^2(S)}.
\end{equation*}
Together with Lemma \ref{lem:cGL2} which controls the $L^2$ norm of $v^\beta$, this completes the proof.
\end{proof}

\medskip

An immediate result of the lemma is that
\begin{equation*}
\frac{v^\beta}{\sqrt{\beta^d}} \xrightarrow[\beta \to 0]{\mathrm{distribution}} -\sigma k^2  \int_{\Rm^d} G(x,y) \chi(y) u(y) dW_y \qquad \text{in } L^2(K).
\end{equation*}
Since $\beta^{-\frac d2}(u^\beta- u)$ has the same limiting distribution, the proof of Theorem \ref{thm:clt} is complete.

\subsection{Proof of Proposition \ref{propG}} \label{secpropG}

Write $x=(x',z)$ with $x'=(x_1,x_{2})$ when $d=3$, and $x'=x_1$ when $d=2$. Taking the Fourier transform of \fref{eq:homG} in the $x'$ variable (with dual variable $\xi \in \Rm^{d-1}$), we find
  \be \label{fourG}
  \partial_z^2 \hat G+k^2(z,\xi) \hat G=e^{i x'_0 \cdot \xi} \delta(z-z_0), \qquad z \in \Rm,\ee
  where $x_0=(x_0',z_0)$ and the complex number $k^2(z,\xi)$ is defined by
  $$
  k^2(z,\xi)=\left\{
    \begin{array}{ll}
      k_0^2(\xi)=k^2 \underline n_0^2(\xi)-|\xi|^2=k^2(n_0^2+ i \alpha)-|\xi|^2 & z >0\\
      k_1^2(\xi)=k^2 \underline n_e^2(\xi)-|\xi|^2=k^2(n_e^2+ i (\kappa_e+\alpha))-|\xi|^2& z \in (-L,0)\\
      k_2^2(\xi)=k^2 \underline n_2^2(\xi)-|\xi|^2=k^2(n_2^2+ i (\kappa_2+\alpha))-|\xi|^2& z<-L.
    \end{array}
    \right.
    $$
We write the dependencies of $\hat G$ as $\hat G(z,z_0,\xi,x_0')$. Although it is possible to obtain exact expressions for $\hat G$, we follow a different route that seems somewhat more direct and would apply if the refractive index in the sea ice layer depends also on $z$. We start with the case $z_0 \in (-L,0)$. 
  \paragraph{Estimates in the layer.}
When $z_0 \in(-L,0)$ and $z>0$, $\hat G$ has the form $\hat G(z,z_0,\xi,x_0')= A e^{i k_0 z}$, for some constant $A$. In the latter expression, $k_0$ is the principal square root of $k_0^2$, and we selected the outgoing solution of \fref{fourG}. In the same way, when $z_0 \in(-L,0)$ and $z<-L$, we have $\hat G(z,z_0,\xi,x_0')= A' e^{-i k_2 (z+L)}$. Since $z_0 \in (-L,0)$, it follows from \fref{fourG} that $\hat G$ and $\partial_z \hat G$ are continuous functions at $z=0$ and $z=-L$, and we therefore obtain the boundary conditions
    $$
    \partial_z \hat G(0,\xi)=i k_0 \hat G(0,\xi), \qquad \partial_z \hat G(-L,\xi)=-i k_2 \hat G(-L,\xi).
    $$
Let $\Phi_\xi$ be the Green's function of the one-dimensional Helmholtz equation with wavenumber $k_1(\xi)$, i.e. $\Phi_\xi(z)=e^{i k_1(\xi) |z|}/(2 i k_1(\xi))$. It is the Fourier transform of $\Phi(x)$ defined in \fref{eq:FreeG} and  verifies
    $$
    \partial_z^2 \Phi_\xi(z)+k_1^2(\xi) \Phi_\xi(z)=\delta(z), \qquad z \in \Rm.
    $$
With $z_0 \in (-L,0)$, define $\hat p$ by $e^{i x'_0 \cdot \xi} \hat p(z,z_0,\xi,x_0')=\hat G(z,z_0,\xi,x_0')-e^{i x'_0 \cdot \xi} \Phi_\xi(z-z_0)$. We have then 
\be \label{eqp}
  \partial_z^2 \hat p+k^2(z,\xi) \hat p=0, \qquad z \in (-L,0).
\ee
To equip the above equation with boundary conditions, we use again the continuity of $\hat{G}$ and $\partial_z \hat{G}$ at $z = 0$ and $z = -L$. Moreover, since $\partial_z |z|= 2H(z)-1$ for $z \neq 0$ and $H$ is the Heaviside function, we find the following boundary conditions for $\hat p$:
$$
\left\{
\begin{array}{l}
  \partial_z \hat p(0,z_0,\xi,x_0')=i k_0 \hat p(0,z_0,\xi,x_0')+ i (k_0 - k_1(2H(-z_0)-1)) \Phi_\xi(z_0)\\
\partial_z \hat p(-L,z_0,\xi,x_0')=-i k_2 \hat p(-L,z_0,\xi,x_0')-i (k_2 + k_1(2H(-z_0-L)-1)) \Phi_\xi(z_0+L).
\end{array}
\right.
$$
We have $H(-z_0)=1$ and $H(-z_0-L)=0$ for all $z_0 \in (-L,0)$, and the latter boundary conditions simplify when $z_0 \in (-L,0)$. Multiplying then \fref{eqp} by $\hat p^*$, integrating by parts in $(-L,0)$ using the above boundary conditions, and taking the real and imaginary parts lead to, for all $z_0 \in (-L,0)$ (we only make explicit the dependency of $\hat p$ on $z$ for simplicity),
\begin{align} \nonumber
\Re (k_0) |\hat p(0)|^2 &+\Re (k_2) |\hat p(-L)|^2+k^2 (\kappa_e+\alpha) \int_{-L}^0 |\hat p(z)|^2 dz \\
&= \label{rel1} 
\Re\left((k_1-k_0) \Phi_\xi(z_0) \overline{\hat p(0)} \right)+\Re\left((k_1-k_2) \Phi_\xi(z_0+L) \overline{\hat p(-L)} \right)\\
 \nonumber \Im (k_0) |\hat p(0)|^2 &+\Im (k_2) |\hat p(-L)|^2+(|\xi|^2-k^2 n_e^2) \int_{-L}^0 |\hat p(z)|^2 dz + \| \partial_z \hat p\|_{L^2(-L,0)}^2\\
&=-\label{rel2}
\Im\left((k_1-k_0) \Phi_\xi(z_0) \overline{\hat p(0)} \right)-\Im\left((k_1-k_2) \Phi_\xi(z_0+L) \overline{\hat p(-L)} \right).
\end{align}
Since $\Im(k_1) \geq 0$, we have $|\Phi_\xi(z)| \leq 1/(2 |k_1|)$ and we find, using the Cauchy-Schwarz inequality in the r.h.s of \fref{rel1}-\fref{rel2},
\begin{align}\label{eg1}
&\Re (k_0) |\hat p(0)| ^2 +\Re (k_2) |\hat p(-L)|^2 +2 k^2 (\kappa_e+\alpha) \int_{-L}^0 |\hat p(z)|^2 dz\leq \frac{|k_1-k_0|^2}{\Re (k_0) |k_1|^2}+\frac{|k_1-k_2|^2}{\Re (k_2) |k_1|^2}\\
&\label{eg2}\Im (k_0) |\hat p(0)| ^2 +\Im (k_2) |\hat p(-L)|^2 +2 (|\xi|^2-k^2 n_e^2) \int_{-L}^0 |\hat p(z)|^2 dz\leq \frac{|k_1-k_0|^2}{\Im (k_0) |k_1|^2}+\frac{|k_1-k_2|^2}{\Im (k_2) |k_1|^2}.
\end{align}
We also have the direct inequality, obtained from \fref{rel1},
\be \label{eg3}
 k^2 (\kappa_e+\alpha) \int_{-L}^0 |\hat p(z)|^2 dz \leq \frac{|k_1-k_0|}{2|k_1|}|\hat p(0)|+\frac{|k_1-k_2|}{2|k_1|}|\hat p(-L)|, \qquad \forall \xi \in \Rm^{d-1}.
 \ee
We now exploit the previous inequalities to estimate $p$ in the layer and start with the $L^2(\Rm^{d-1} \times (-L,0))$ norm.\\

 \textit{$L^2$ estimates.} Inequality \fref{eg1} allows us to estimate $\hat p$ for $|\xi| \leq k n_0$ in $L^2$ as follows: with the change of variables $\xi \to k \xi$ in the r.h.s, we obtain
\be \label{expp} 
2 k^2 \kappa_e \int_{|\xi| \leq k n_0}\int_{-L}^0 |\hat p(z,\xi)|^2 dz d\xi
\leq k^{d-2}\int_{|\xi| \leq n_0} \left(\frac{|\underline n_e-\underline n_0|^2}{\Re(\underline n_0)|\underline n_e|^2}+\frac{|\underline n_e-\underline n_2|^2}{\Re(\underline n_2)|\underline n_e|^2}\right) (\xi) d\xi,
\ee
with
$$
\underline{n}_0^2(\xi)=n_0^2-|\xi|^2 + i \alpha, \qquad \underline{n}_e^2(\xi)=n_e^2-|\xi|^2+ i (\kappa_e+\alpha), \qquad \underline n_2^2(\xi)=n_2^2-|\xi|^2+ i (\kappa_2+\alpha).
    $$
We only treat the term in $\underline{n}_0$ in the r.h.s of \fref{expp} since the other one is handled similarly. Some direct algebra involving Taylor expansions shows that there exists $C>0$ that is independent of $\xi$ and $\alpha$ such that 
\be \label{estdn}
|(\underline n_e-\underline n_0)(\xi)| +|(\underline n_e-\underline n_2)(\xi)| \leq C, \qquad \forall \xi \in \Rm^{d-1}.
\ee
With $\kappa_\alpha=\kappa_e+\alpha$, we are then left with estimating the integral 
\begin{align*}
\int_{|\xi| \leq n_0} &\frac{d \xi}{\Re(\underline n_0(\xi))  |\underline n_e(\xi)|^2}=  \int_{|\xi| \leq n_0 } \frac{d \xi}{\sqrt{\sqrt{(n_0^2-|\xi|^2)^2+\alpha^2}+(n_0^2-|\xi|^2)}\sqrt{(n_e^2-|\xi|^2)^2+\kappa_\alpha^2)}} \\
                      & \leq \frac{1}{\kappa_\alpha} \int_{|\xi| \leq n_0 } \frac{d \xi}{\sqrt{n_0^2-|\xi|^2}} \leq \frac{C}{\kappa_\alpha},
\end{align*}
where $C$ is independent of $\kappa_e$ and $n_0$. Above, we used the fact that $n_0<n_e$. This gives us a bound for $\hat p$ in $L^2$ in the region where $|\xi| \leq k n_0$. Assume now without lack of generality that $n_e>n_2$ (the opposite case $n_e<n_2$ can be handled similarly). For $|\xi| \geq k n_e$, we combine \fref{eg2} and  \fref{eg3} to arrive at
$$
 C k^2 (\kappa_e+\alpha) \int_{-L}^0 |\hat p(z)|^2 dz \leq \frac{|k_1-k_0|^2}{\Im (k_0) |k_1|^2}+\frac{|k_1-k_2|^2}{\Im (k_2) |k_1|^2}.
$$
The first term in the r.h.s is easily taken care of by \fref{estdn} and the same argument as before. For the second one, with the notation $\kappa_{2,\alpha}=\kappa_2+\alpha$, we need to estimate the integral: 
\begin{equation*}
\int_{|\xi| \geq n_e} \frac{d \xi}{\Im(\underline n_2(\xi))  |\underline n_e(\xi)|^2}=  \int_{|\xi| \geq n_e } \frac{d \xi}{\left(\sqrt{(n_2^2-|\xi|^2)^2+\kappa_{2,\alpha}^2}-(n_2^2-|\xi|^2)\right)^{1/2}\sqrt{(n_e^2-|\xi|^2)^2+\kappa_\alpha^2)}}.
\end{equation*}
Using the change of variable $|\xi| \to \kappa_\alpha r + n_e$, we see that the above is bounded by
\begin{equation*}
\leq C \int_{0}^\infty \frac{ (\kappa_\alpha r+ n_e)^{d-2}d r}{\sqrt{(\kappa_\alpha r+n_e+n_2)(\kappa_\alpha r+n_e-n_2)}\sqrt{(((\kappa_\alpha r+2 n_e)r)^2+1)}} \leq C (1+\kappa_e^{- \delta}),
\end{equation*}
for any $\delta>0$. It remains to treat the case $k n_0 \leq |\xi| \leq k n_e$. We start by using \fref{eg3} to obtain, after multiplying by $k^2 n_e^2-|\xi|^2$, 
$$
\int_{-L}^0 |\hat p(z)|^2 dz\leq \frac{(k^2 n_e^2-|\xi|^2)}{ k^2 (\kappa_e+\alpha)}\left(\frac{|k_1-k_0|}{2|k_1|}|\hat p(0)|+\frac{|k_1-k_2|}{2|k_1|}|\hat p(-L)|\right).
$$
Using \fref{eg2} and Young's inequality, we get
\be \label{eqim}
\Im (k_0) |\hat p(0)| ^2 +\Im (k_2) |\hat p(-L)|^2 \leq C\left(\frac{(k^2 n_e^2-|\xi|^2)}{ k^2 (\kappa_e+\alpha) }\right)^2 \left(\frac{|k_1-k_0|^2}{\Im (k_0) |k_1|^2}+\frac{|k_1-k_2|^2}{\Im (k_2) |k_1|^2}\right),
\ee
which, together with \fref{eg3}, leads to
$$
  k^2 (\kappa_e+\alpha) \int_{-L}^0 |\hat p(z)|^2 dz \leq C \left(\frac{(k^2 n_e^2-|\xi|^2)}{ k^2 (\kappa_e+\alpha) }\right) \left(\frac{|k_1-k_0|^2}{\Im (k_0) |k_1|^2}+\frac{|k_1-k_2|^2}{\Im (k_2) |k_1|^2}\right).
$$
  Integrating over $k n_0 \leq |\xi| \leq k n_e$, and using that $(k^2 n_e^2-|\xi|^2)/|k_1|^2 \leq 1$ together with \fref{estdn}, we find 
$$
 k^{4-d} (\kappa_e+\alpha)^2 \int_{k n_0 \leq |\xi| \leq k n_e} \int_{-L}^0 |\hat p(z)|^2 dz d\xi \leq C \int_{ n_0 \leq |\xi| \leq n_e}\left(\frac{1}{\Im (\underline n_0)}+\frac{1}{\Im (\underline n_2)}\right) d \xi \leq C.
$$
 Collecting results, we have proven \fref{plemma}. We turn now to the $L^\infty$ estimates.\\

\textit{$L^\infty$ estimates.} We control $\|p\|_{L^\infty}$ by $\|\hat{p}\|_{L^1}$. For $z \in [-L,0]$, the solution to \fref{eqp} reads
$$
\hat p(z)=\frac{\sin(k_1(z+L))}{\sin(k_1L)}\hat p(0)-\frac{\sin(k_1z)}{\sin(k_1L)}\hat p(-L)
$$
With $\Im(k_1)>0$, the function $| \sin(k_1 z)|$ is strictly increasing with $|z|$, so that
$$
\left|\frac{\sin(k_1(z+L))}{\sin(k_1L)}\right| \leq 1, \qquad \left|\frac{\sin(k_1z)}{\sin(k_1L)}\right| \leq 1,
$$
and as a consequence $
|\hat p(z)| \leq |\hat p(0)|+|\hat p(-L)|
$. As before, assuming without lack of generality that $n_e>n_2$, we split the integration into three parts, $|\xi| \leq k n_0$, $k n_0 \leq |\xi| \leq kn_e$, and $|\xi|>k n_e$. We estimate the part $|\xi| \leq k n_0$ using \fref{eg1}. We only treat $\hat p(0)$ and the term involving $k_0$ in the r.h.s since all other calculations are similar. Using \fref{estdn} and the fact that $n_0<n_e$, we then find
$$
k^{d-2}\int_{|\xi| \leq n_0}\frac{|\underline n_e-\underline n_0|}{\Re (\underline n_0) |\underline n_e|} d\xi \leq Ck^{d-2}\int_{|\xi| \leq n_0}\frac{d\xi}{\Re (\underline n_0)} d\xi \leq C k^{d-2}.
$$
For the part $|\xi| \geq k n_e$, we only treat as above the $k_0$ part of the r.h.s in $\hat p(0)$ and write
\be \label{xip}
k^{d-2}\int_{|\xi| \geq n_e}\frac{|\underline n_e-\underline n_0|}{\Im (\underline n_0) |\underline n_e|} d\xi =k^{d-2} \int_{n_e \leq |\xi| \leq R}(\cdots)d\xi+k^{d-2} \int_{|\xi| \geq R}(\cdots)d\xi,
\ee
for some $R>0$ that will be defined below. The first term is easily estimated using \fref{estdn}:
$$
C k^{d-2} \int_{n_e \leq |\xi| \leq R} \frac{d\xi}{|n_e|} \leq C k^{d-2}.
$$
For the second term, we need to refine estimate \fref{estdn} for large $|\xi|$, and find, for appropriate $C_{R}$ and $R$ sufficiently large, 
\be \label{estdn2}
|(\underline n_e-\underline n_0)(\xi)| +|(\underline n_e-\underline n_2)(\xi)| \leq C_{R} |\xi|^{-1}, \qquad \forall |\xi| \geq R.
\ee
With the change of variables $|\xi| \to \kappa_\alpha r+n_e $ in the second term of \fref{xip}, we find, using \fref{estdn2},
\begin{align*}
C \kappa_\alpha^{1/2} \int_{0}^\infty \frac{ (\kappa_\alpha r+ n_e)^{d-2}d r}{\sqrt{(\kappa_\alpha r+n_e+n_2)(\kappa_\alpha r+n_e-n_2)}(((\kappa_\alpha r+2 n_e)r)^2+1)^{1/4} (\kappa_\alpha r+n_e)} \leq C.
\end{align*}
The last piece $k n_0 \leq |\xi| \leq k n_e$ is estimated by using \fref{eqim}. Since $(n_e^2-|\xi|^2)/|n_e| \leq (n_e^2-|\xi|^2)^{1/2}$, the $k_0$ part in the r.h.s in $\hat p(0)$ is bounded by 
$$
k^{d-2}\int_{n_0 \leq |\xi| \leq n_e}\frac{n_e^2-|\xi|^2}{\kappa_e+\alpha}\frac{|\underline n_e-\underline n_0|}{\Im (\underline n_0) |\underline n_e|} d\xi
\leq k^{d-2}\int_{n_0 \leq |\xi| \leq n_e}\frac{(n_e^2-|\xi|^2)^{1/2}}{\kappa_e}\frac{d\xi }{\Im (\underline n_0)} d\xi \leq C k^{d-2} \kappa_e^{-1}.
$$
This proves the first estimate in \fref{plemma2}. The second one follows from the observation that, for $z>0$ and $z_0 \in (-L,0)$,
$$
|\hat p(z)|=| \hat p(0) e^{i k_0(\xi) z}| \leq |\hat p(0)|.
$$

We turn now to the case $z_0>0$ and prove the second estimate in \fref{Glemma}. The calculations are very similar and we only provide the main ideas.

\paragraph{Estimates for $z_0$ in the upper half-space.} We start with \fref{fourG} with now $z_0>0$, and derive first boundary conditions at $x=0$ and $x=-L$. The one at $x=-L$ is the same as when $z_0 \in (-L,0)$. For the one at $x=0$, we notice that $\partial_z \hat G$ has a jump of value $e^{i x_0' \cdot \xi}$ at $z=z_0$. Accounting for this, solving \fref{fourG} for $z>z_0$ and $z \in (0,z_0)$, and eliminating the unknown reflection coefficient at $x=0$  yields the boundary condition
$$
\partial_z \hat G(0)= i k_0 \hat G(0)-e^{i (k_0 z_0+x_0'\cdot \xi)}.
$$
In the region $z \in (-L,0)$, $\hat{G}$ satisfies the Helmholtz equation with  wavenumber $k_1$ and has the boundary conditions specified above. We can then estimate $\hat{G}$ in $L^2$ by reproducing the steps for the analysis of $\hat p$ in the previous section. In particular, we replace, in \fref{rel1}-\fref{rel2}, $i(k_1-k_0) \Phi_\xi(z-z_0)$ by $e^{i (k_0 z_0+x_0'\cdot \xi)}$, $\hat{p}$ by $\hat{G}$ and set $k_1=k_2$. We then obtain for the parts $|\xi| \leq k n_0$ and $|\xi| \geq k n_e$, 
$$
2 k^2 \kappa_e \int_{|\xi| \leq k n_0}\int_{-L}^0 |\hat G(z,\xi)|^2 dz d\xi
\leq C k^{d-2}\int_{|\xi| \leq n_0} \frac{e^{-2 k \Im(\underline n_0) z_0}}{\Re(\underline n_0)}d\xi \leq C k^{d-2},
$$
and
\begin{align*}
2 k^2 \kappa_e \int_{|\xi| \geq k n_e}\int_{-L}^0 |\hat G(z,\xi)|^2 &dz d\xi
                                                                      \leq C k^{d-2}\int_{|\xi| \geq n_e} \frac{e^{-2 k \Im(\underline n_0) z_0}}{\Im(\underline n_0)}d\xi \\
  &\leq
C k^{d-2}\int_{|\xi| \geq n_e} \frac{e^{-2 k \sqrt{|\xi|^2-n_0^2} z_0}}{\sqrt{|\xi|^2-n_0^2}}d\xi  \leq C k^{d-2} (1+k^{2-d}).
\end{align*}
The last piece $k n_0 \leq |\xi| \leq k n_e$ is estimated by 

$$
 k^{4-d} (\kappa_e+\alpha)^2 \int_{k n_0 \leq |\xi| \leq k n_e} \int_{-L}^0 |\hat G(z,\xi)|^2 dz d\xi \leq C \int_{ n_0 \leq |\xi| \leq n_e}\frac{e^{-2 k \Im(\underline n_0) z_0}}{\Im (\underline n_0)}\leq C.
 $$
 Gathering previous estimates, we obtain the second part of \fref{Glemma}.

We continue the proof of lemma by estimating the free Green's function $\Phi_\xi$.
  \paragraph{Estimates on $\Phi_\xi$.}

In order to obtain the first estimate in \fref{Glemma}, we compute
\bee
\|\Phi_\xi(\cdot - z_0) \|^2_{L^2(S)}&=&\int_{\Rm^{d-1}}\int_{-L}^0
\frac {e^{- 2\Im{(k_1(\xi))}|z-z_0|}}{4 |k_1(\xi)|^2} d\xi dz \leq C \int_{\Rm^{d-1}}
\frac {d \xi}{\Im(k_1(\xi)) |k_1(\xi)|^2}\\
&\leq & C k^{d-4} \int_{\Rm^{d-1}}
\frac {d \xi}{\Im(\underline n_e(\xi)) |\underline n_e(\xi)|^2}.
\eee
With the change of variables $|\xi| \to \kappa_\alpha r +n_e$, we find
\begin{align*}
\int_{\Rm^{d-1}} &\frac{d \xi}{\Im(\underline n_e(\xi))  |\underline n_e(\xi)|^2}=  \int_{\Rm^{d-1}} \frac{d \xi}{\sqrt{\sqrt{(n_e^2-|\xi|^2)^2+\kappa_\alpha^2}-(n_e^2-|\xi|^2)}\sqrt{(n_e^2-|\xi|^2)^2+\kappa_\alpha^2)}} \\
 &=C \kappa_\alpha ^{-1/2} \int_{-n_e/\kappa_\alpha}^\infty \frac{ (\kappa_\alpha r+ n_e)^{d-2}d r}{\sqrt{\sqrt{((\kappa_\alpha r+2 n_e)r)^2+1}+(\kappa_\alpha r+2 n_e)r}\sqrt{(((\kappa_\alpha r+2 n_e)r)^2+1)}}. \end{align*}
It is direct to see that the above integral is finite for $r>0$ and then bounded by $C \kappa_e^{-1/2}$. We focus therefore on the piece $r \in (-n_e / \kappa_e,0)$. Since the function $x \to \sqrt{x^2+1}-x$ is decreasing, the integral is readily bounded by $C_R \kappa_e^{-1/2}$ for $r \in (-R, 0)$. For $R$ sufficiently large, there is a constant $C_R$ such that
\be
\sqrt{x^2+1}-x \geq C_R /x, \qquad \forall x \geq R. \label{LB}
\ee
If $n_e / \kappa_\alpha \leq R$ we are done and the integral is controlled by $C \kappa_e^{-1/2}$. If not, we estimate the integral by, using \fref{LB} and the decrease of $x \to \sqrt{x^2+1^2}-x$, since $(2n_e-\kappa_\alpha r)r \leq (2n_e-\kappa_\alpha R) r:=c_R r$ for $r \in (R, n_e / \kappa_\alpha)$,
\bee
C \kappa_e ^{-1/2}  \int_{R}^{n_e/\kappa_\alpha} \frac{d r}{\sqrt{\sqrt{(c_R r)^2+1}-c_R r}\sqrt{((n_e r)^2+1)}} \leq C \kappa_e ^{-1/2} \int_{R}^{n_e/\kappa_\alpha} dr/r^{1/2} \leq C \kappa_e^{-1}.
\eee
Collecting estimates, and using $\kappa_e \leq 1$, we finally arrive at
$$
\sup_{z_0 \in \Rm} \|\Phi_\xi(\cdot - z_0) \|_{L^2(S)} \leq C k^{d/2-2} \kappa_e^{-1/2}.
$$

Together with \fref{plemma}, this yields the first estimate in \fref{Glemma}.

\paragraph{Estimate in $H^s(K)$.} We write $G(x,y)=\Phi(x-y)+p(x,y)$ and start with $\Phi$, for which a direct computation shows that it does not belong to the Sobolev space $H^1(\Rm^d)$. Nevertheless, we can check that for any $s < (4-d)/2$, we have $\Phi \in H^s(\Rm^d)$. Indeed, since we have from \fref{eq:FreeG} that
\begin{equation*}
\mathcal{F}_{x\mapsto \xi}[\Phi](\xi) = \frac{1}{-|\xi|^2 + k^2\underline n_e^2},
\end{equation*}
and we find
\begin{equation*}
  \int_{\Rm^d} (1+|\xi|^2)^s \left\lvert \mathcal{F}_{x\mapsto \xi}[\Phi](\xi)\right\rvert^2 d\xi = \int_{\Rm^d} (1+|\xi|^2)^s \frac{1}{|-i k^2(\kappa_e+\alpha)-k^2n_e^2 + |\xi|^2|^2} d\xi.
\end{equation*}
Since $\kappa_e+\alpha > 0$, we see that the integral above is finite as long as $4-2s > d$, or equivalently, $s < (4-d)/2$. This shows that $\Phi \in H^s(\Rm^d) \subset H^s(K)$. Regarding $p$, it satisfies the equation
$$
\Delta_x p+ k^2 \underline n^2 p= k^2(\underline n_e^2-\underline n^2) \Phi(x-y). 
$$
\medskip
With \fref{plemma2} and the fact that $\Phi \in L^2(\Rm^d)$, it follows that $\sup_{y \in S}\| \Delta_x p(\cdot,y)\|_{L^2(K)}$ is finite, which proves \fref{estHsG} by elliptic regularity.

\subsection{Other proofs}
\subsubsection{Proof of Lemma \ref{G2d}} \label{proofG2d}

  The zero order Hankel function $H_0^{(1)}(z)$ admits the expression, for $z \in \Cm$ with $\Re(z) > 0$ and $\Im(z)>0$,
   $$
  H_0^{(1)}(z)=\frac{1}{\pi} \int_0^\pi e^{i z \sin t} dt -\frac{2 i}{\pi} \int_0^\infty e^{-z \sinh t} dt:=H_1(z)+H_2(z).
  $$
  Since $\Im(z) \geq 0$, we have the direct estimate $|H_1(z)| \leq 1$. Furthermore,
  $$
   H_2(z)=-\frac{2 i}{\pi} \int_0^1 e^{-z \sinh t} dt-\frac{2 i}{\pi} \int_1^\infty e^{-z \sinh t} dt:=H_3(z)+H_4(z),
   $$
   with $|H_3(z)| \leq 2 /\pi$. 
   Finally, since $2 \sinh(t) \geq e^t-e$, for all $t \geq 1$, we have
\bee
   |H_4(z)| &\leq& \frac{2 e^{\frac{1}{2} e \Re(z)}}{\pi} \int_1^\infty e^{- \frac{1}{2}\Re(z) e^t} dt=\frac{2e^{\frac{1}{2}e \Re(z)}}{\pi} \int_{\frac{e}{2}}^\infty \frac{e^{-\Re(z) u}}{u} du=\frac{2e^{\frac{1}{2} e \Re(z)}}{\pi} \int_{\frac{1}{2}e \Re(z)}^\infty \frac{e^{-u}}{u} du\\
   &=& \frac{2}{\pi} \int_{0}^\infty \frac{e^{-u}}{u+\frac{1}{2} e \Re(z)} = \int_{1}^\infty(\cdots)+\int_{0}^1(\cdots) \leq C+ C\left|\log\left(\frac{1}{2} e\Re(z)\right)\right| \\
   &\leq& C (1+ |\log (\Re(z))|).
     \eee
Above, we used that $\log(1+\frac{1}{x})\leq 1+ |\log(x)|$ for $x>0$. This ends the proof.

   \subsection{Proof of Theorem \ref{th:HF}}

We study first the high frequency limit in the interior of the domains $\{z>0\}$, $\{z \in (-L,0)\}$ and $\{z<-L\}$, and only detail the most interesting case $\{z \in (-L,0)\}$. For this, let $\varphi$ be a smooth test function with support in $\{z \in (-L,0)\}$, and let $v_\varphi= v \varphi$. Then $v_\varphi$ verifies
 \begin{align} \label{eqvp}
   \eta^2 \Delta v_\varphi(x)+ &\big(k^2(x)+i \eta \mu(x)\big)  v_\varphi(x)\\ \nonumber
&= (2\pi)^2 \varphi(x)\chi(x) u(x) dW_x+\eta^2 v(x) \Delta \varphi(x)+2 \eta^2 \nabla \varphi(x) \cdot \nabla v(x)\\
   &:=s_1(x)+\eta^2 s_2(x).\nonumber
 \end{align}
Note, since $\varphi$ is supported in $\{z \in (-L,0)\}$, that we have above $k(x)=k_e$ and $\mu(x)=\mu_e$. The starting point is the following expression of Wigner transform of $v_\varphi$ in the Fourier space:
\be \label{Fwig}
\widehat{\calW[v_\varphi]}(q,p)=\frac{1}{(2\pi)^d \eta^d} \hat v_\varphi\left(\frac{q}{2}+\frac{p}{\eta} \right)\hat v^*_\varphi\left(\frac{q}{2}-\frac{p}{\eta} \right).
\ee
Taking then the Fourier transform of \fref{eqvp} yields

 $$
 \widehat{v}_\varphi (\xi)=\frac{\widehat{s}_1(\xi)+\eta^2 \widehat{s}_2(\xi)}{a_\eta(\xi)}, \qquad a_\eta(\xi)=k_e^2+i \eta \mu_e-\eta^2 |\xi|^2.
 $$
Since $\E\{\hat s_1\}=0$, we find
\be \label{Ewig}
\E\{ \widehat{v}_\varphi (\xi) \widehat{v}^*_\varphi (\xi') \} =\frac{\E\{ \hat s_1(\xi) \hat s_1^*(\xi')\}+ \eta^4 \hat s_2(\xi) \hat s_2^*(\xi')}{a_\eta(\xi)a_\eta^*(\xi')}.
 \ee
Moreover, \fref{varWien} implies that
 \be \label{Es1}
 \E\{ \hat s_1(\xi) \hat s_1^*(\xi')\}=(2 \pi)^4 \tau^2 \int_{\Rm^d}e^{-i (\xi+\xi') \cdot x} |\varphi \chi u|^2(x) dx= (2 \pi)^4 \tau^2 \widehat{|\varphi \chi u|^2} (\xi+\xi').
 \ee
We next realize that
 \begin{align*}
   &\frac{1}{a_\eta(\frac{q}{2}+\frac{p}{\eta})a_\eta^*(\frac{q}{2}-\frac{p}{\eta})}=\frac{1}{2 \eta(i  \mu_e-p \cdot q)} \\
   &\hspace{2cm}\times \left(\frac{1}{-i \eta \mu_e+ k_e^2-|p|^2+\eta p \cdot q-\eta^2|q|^2/4}
 -\frac{1}{i \eta \mu_e+ k_e^2-|p|^2-\eta p \cdot q-\eta^2|q|^2/4}
 \right)
 \end{align*}
and since 
 $$
 \lim_{\eta \to 0} \left(\frac{1}{x+ i \eta}-\frac{1}{x-i \eta}\right)=- 2i \pi \delta(x),
 $$
we find, in the distribution sense,
$$
\lim_{\eta \to 0}\frac{\eta }{a_\eta(\frac{q}{2}+\frac{p}{\eta})a_\eta^*(\frac{q}{2}-\frac{p}{\eta})}=\frac{  \pi}{ (\mu_e+i p \cdot q)}\delta(|p|^2-k_e^2).
$$
Using this, together with \fref{Fwig}, \fref{Ewig} and \fref{Es1} finally yields
$$
( \mu_e+ i p \cdot q)\calW[v_\varphi](q,p)=\pi \delta(|p|^2-k_e^2) \left( \frac{(2 \pi)^4\tau^2}{(2\pi)^d \eta^{d+1}}\widehat{|\varphi \chi u|^2} (q)+ \eta^3 \calW[s_2](q,p)\right)+o(\eta).
$$
The term $\eta^3 \calW[s_2](q,p)$ leads to a negligible contribution in the interior of the domain $\{z \in (-L,0)\}$, and can therefore be neglected. After an inverse Fourier transform, we then obtain for $\calW[v_\varphi]$:
$$
( \mu_e+ p \cdot \nabla_ x)\calW[v_\varphi](x,p)=\frac{\pi (2 \pi)^4\tau^2}{\eta^{d+1}}\delta(|p|^2-k_e^2) |\varphi \chi u|^2(x) (q)+o(\eta),
$$
where $o(\eta)$ denotes a term that converges to zero as $\eta \to 0$ in the distribution sense. Moreover, we have $\calW[v_\varphi] =|\varphi|^2 \calW[v] +o(\eta)$ as $\eta \to 0$, and we recover the equation in the theorem after removing the test function $\varphi$. The calculations are similar in the domains $\{z>0\}$ and $\{z<-L\}$, and lead to a free transport equation of the form $( \mu(x)+ p \cdot \nabla_ x)\calW[v](x,p)=o(\eta)$.

The reflection-transmission boundary conditions are obtained for instance by adapting the results of \cite{CPDE-half} to our layered case. This ends the proof.

\section{Conclusion} \label{conc} We have derived in this work, under appropriate conditions on the wavelength, the typical length scale of the fluctuations and their variances, an asymptotic model for the first-order corrector to the homogenization limit. We have considered a simplified model where the inhomogeneities occupy a single layer with flat interfaces, and where the propagation of waves is described by a random Helmholtz equation. The corrector takes the simple form of a Gaussian field whose statistics depends on the homogenized solution and the homogenized Green's function. In addition, when the thickness of layer is sufficiently large compared to the wavelength, we have obtained a transport model for the correlations of the correctors. Finally, we have explained how to generalize our results to more complicated settings of multi-layers with rough boundaries.

This work is the first step towards the understanding of the true physical problem and it leaves many questions open. We describe next some of those problems. Firstly, what is the form of the corrector when the fluctuations of the media have stronger strength? In this case, the homogenized model involves the resolution of a cell problem and the characterization of the corrector becomes much more difficult. Secondly, how significant are boundary effects? It is unclear whether the main contribution to the backscattered signal comes from volumic scattering by the heterogeneities in the sea ice or from surface scattering from rough interfaces, in particular the sea ice / sea water interface that is the most relevant for our purpose. Finally, how all of this translates to the true physical situation that involves electromagnetic waves and Maxwell's equations? These questions will be addressed in future works.
\bibliographystyle{siam}
\bibliography{bibliography.bib}

\end{document}